\newtheorem{theorem}{Theorem}[]
\newtheorem{statement}{Statement}[]
\newtheorem{defi}{Definition}[]
\theoremstyle{definition}
\newtheorem{corollary}{Corollary}[]
\newtheorem{remark}[]{Remark}[]
\begin{document}
\title[]{Diameter, width and thickness in the hyperbolic plane}
\author[\'A. G.Horv\'ath]{\'Akos G.Horv\'ath}
\address {\'A. G.Horv\'ath \\ Department of Geometry \\ Mathematical Institute \\
Budapest University of Technology and Economics\\
H-1521 Budapest\\
Hungary}
\email{ghorvath@math.bme.hu}

\subjclass[2010]{51M09,51M10}
\keywords{constant width property, diameter of a convex set, hyperbolic plane, thickness of a convex set, width function}

\begin{abstract}
This paper contains a new concept to measure the width and thickness of a convex body in the hyperbolic plane. We compare the known concepts with the new one and prove some results on bodies of constant width, constant diameter and given thickness.
\end{abstract}

\date{}

\maketitle

\section{Introduction}

In hyperbolic geometry there are several concepts to measure the breadth of a convex set. In the first section of this paper we introduce a new idea and compare it with four known ones. Correspondingly, we define three classes of bodies, bodies of constant with, bodies of constant diameter and bodies having the constant shadow property, respectively. In Euclidean space more or less these classes are agree but in the hyperbolic plane we need to differentiate them. Among others we find convex compact bodies of constant width which size essential in the fulfilment of this property (see Statement \ref{st:regularpoly}) and others where the size dependence is non-essential (see Statement{st:circle}). We prove that the property of constant diameter follows to the fulfilment of constant shadow property (see Theorem \ref{thm:constdiamandconstshadow}), and both of them are stronger as the property of constant width (see Theorem \ref{thm:constdiam}). In the last part of this paper, we introduce the thickness of a constant body and prove a variant of Blaschke's theorem on the larger circle inscribed to a plane-convex body of given thickness.

\subsection{The hyperbolic concepts of width (breadth) introduced earlier}

\subsubsection{$\mathrm{width}_1(\cdot)$:}
Santal\'o in his paper \cite{santalo} developed the following approach. For a convex set $K$, let $z$ be a point in the boundary of $K$. Let $L_z$
be the supporting line of $K$ passing through $z$ and let $L_z^\bot$ be the orthogonal line to $L_z$ at the point $z$. Choose another supporting line $L_v$ of $K$  that is orthogonal to $L_z^\bot$, then the \emph{breadth of $K$ corresponding to $z$} is the hyperbolic distance between $L_v$ and $L_z$, i.e., the length of the segment $L_z^\bot$ joining the lines $L_v$ and $L_z$. Under this approach, the set $K$ is said to have constant width $\lambda$ if the breadth of $K$ with respect to every point in its boundary is $\lambda$. The problem with this approach that there is no chance to define "opposite points" on the boundary of $K$, and the Euclidean concept of affine diameter cannot be interpretable.

\subsubsection{$\mathrm{width}_2(\cdot)$:}
Historically the next development is due to  J. Filmore in \cite{filmore} who introduced the following notion. If a set of the hyperbolic plane holds the property that with every pair $x,y$ of its points it contains the intersection of all paracyclic (horocyclic) domains which contain $x$ and $y$, then we say that the set is a h-convex set of the plane. Of course every h-convex set is also convex in the standard hyperbolic meaning but for example a convex polygon is not h-convex. Let $K$ be an h-convex set with smooth boundary and fix a point $O$ in the plane. Consider a half-line through the point $O$ and consider that pair of paracycles which tangents to $K$ and go through on the ideal point of the given half-line. The width in the direction of this half-line is the length of the segment which endpoints are the intersection points of the above paracycles with the line containing the given half-line. The width in this approach depends on the point $O$ and cannot measure the sets which convex but not h-convex one.

\subsubsection{$\mathrm{width}_3(\cdot)$:}
K. Leichtweiss introduced a concept in \cite{leichtweiss} which don't use touching lines or paracycles to measure the width of a h-convex set. His idea is to start with strip regions which are homogeneous in the sense that they admit one-parameter subgroups of motions, which in the hyperbolic case is the group of translations correspondingly to a given line. (More precisely, regard those translations which are the product of two reflections in lines which are orthogonal to the given line. These translations form a subgroup of motions of the hyperbolic plane. The strips which invariant against the elements of this subgroup must be bounded by geodesic parallel curves of constant curvature, which are the so-called hypercycles (distance curves) of the given line.) Thus it is quite natural to define the width a horocyclic convex curve $C$ in a certain direction by means of the width of the supporting strip region of this direction. In this approach the condition of h-convexity also needs because the curvature of the boundary points of the strip region is less than the curvature of paracycles but again it is non-zero. Consequently, this method cannot be applied for polygons, too. On the other hand it is a strong connection with the notions of spherical geometry and we can define appropriate signed support function $h(\theta)$ for a h-convex set like in the classical convex geometry. The width $w(\theta)$ in a direction holds the equality $w(\theta)=h(\theta)+h(\theta+\pi)$. Observe that these functions depends on the origin and so the "width of the h-convex set in a direction" doesn't an inner property of the set, like the earlier concept of Filmore. Advantage of this building up that in the case of h-convex sets we can say about "opposite points" and "affine diameters" as in the Euclidean situation.

\subsubsection{$\mathrm{width}_4(\cdot)$:}
Recent approach in the literature is in the paper of J. Jeronimo-Castro and F. G. Jimenez-Lopez \cite{jeronimo-castropaper}. The authors first introduce  the notion of opposite points for an arbitrary compact convex body $K$ of the hyperbolic plane. We translate the definition to the model-free language of the hyperbolic plane. Transform with an isometry the point $z$ of the boundary of $K$ to a given (and fixed) point $O$ of the plane such a way that a given (and fixed) line $l$ of the plane through $O$ will be a supporting (tangent) line $L_z$ of the image. In the second step search a point $w$ on the boundary of $K$ for which a support line $L_w$ at $w$ has equal alternate angle with the segment $[z,w]$ as the angle between $[z,w]$ and $L_z$. By definition such a pair $z$ and $w$ of points are \emph{opposite points} with respect to $K$ and the segment $[z,w]$ is an \emph{isometric diameter}. The existence at least one opposite pair to a given $z$ can be seen immediately from the Poincare's disk model. Similarly, it can be seen easily that if $K$ smooth and h-convex $z$ uniquely determines its opposite pair $w$. Let denote by $L_{z,w}$ the line of the segment $[z,w]$. Since $L_{z,w}$ has equal alternate angles with $L_z$ and $L_w$ these latter are ultraparallel lines (non intersected and non-parallel) implying that, there is an unique distance of it. This distance is the width of the body corresponding to the isometric diameter $[z,w]$. Note that in this approach there is no such width function which for every direction uniquely determines a real number. It is possible, that for the same boundary point $z$ we find two opposite points $w'$ and $w''$ for which the corresponding width are distinct to each other. But immediately can be defined the notion of \emph{curve of constant width} with the property that all widths let be the same value with respect to all pairs of opposite points.

Our method gives a synthesization of the above concepts preserving the important properties of those ones.

\subsection{Comparison of the known concepts of width}

First we observe that we can compare the Santal\'o's breadth $\mathrm{width}_1(z)$ with $\mathrm{width}_4(z)$ of Jeronimo-Castro and Jimenez-Lopez. Both of them corresponds the value of width to the points of the boundary of $K$. Concentrating to the method of \cite{jeronimo-castropaper}, if the boundary of $K$ smooth and horocyclic convex, a point on the boundary has an unique opposite pair and the width with respect to this point can be defined as the distance of the opposite (in this sence) tangent lines.

\begin{statement}\label{st:santalo_jeronimo-castro}
Let $K$ be a convex compact body of the plane with smooth and h-convex boundary. Then we have
\begin{equation}\label{ineq:santalo_jeronimo-castro}
\mathrm{width}_{4}(z)\geq \mathrm{width}_1(z).
\end{equation}
Equality holds at that point $z$ where the normal is a double-normal.
\end{statement}

\begin{proof}
The breadth of \cite{santalo} can be seen as the distance of the tangent at the points $z$ and $v$, while the width of \cite{jeronimo-castropaper} can be found as the distance of the lines $L_z$ and $L_w$ (see Fig. \ref{fig:santalo_jeronimo-castro}). From synthetic absolute geometry (see in \cite{appendix}) we know that this latter quantity attend at the perpendicular to $L_z$ from the midpoint $M$ of the segment $[z,w]$.

\begin{figure}[ht]
\includegraphics[scale=0.6]{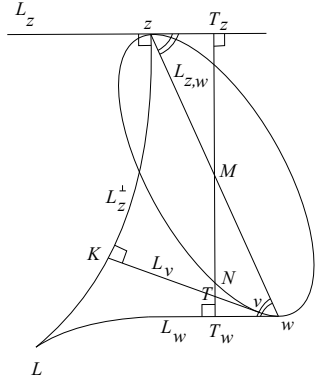}
\caption{Comparison of Santal\'o width function to the width of Jer\'onimo-Castro and Jimenez-Lopez}\label{fig:santalo_jeronimo-castro}
\end{figure}

In the figure it is the length of the segment $[T_z,T_w]$. Since the points $w$ and $z$ are in distinct half-planes of the line $T_zT_w$. the boundary of $K$ intersects the segment $[T_zT_w]$ in two points, one of them (say $N$) is on the segment $[M,T_w]$. Note that the lines $L_w$ and $L_z^\bot$ either non-intersecting or intersect each other in a point $L$. In the second case, the quadrangle with vertices $L$,$z$,$T_z$ and $T_w$ is a Saccheri quadrangle implying that $T_wLz_{\angle}$ is acute angle. From this follows that the touching point $v$ -- which needed to the determination of the Santalo's breadth -- is on the arc $wNz$. This also true in the first case, hence $L_v$ intersects the segment $[N,T_w]$ in a point $T$. This provides the following inequalities:
\begin{equation}
\mathrm{width}_{4}(z)=|[T_z,T_w]|\geq |[T,T_z]|\geq |[K,z]|=\mathrm{dist}(L_z,L_v)=\mathrm{width}_1(z).
\end{equation}
(The second inequality follows from the fact that the distance of two lines is less or equal to the length of another transversalis of them.) Clearly in the case of equality we have $z=T_z$, $w=v=T_w$ and the isometric diameter $[z,w]$ is a double normal of the boundary of $K$.
\end{proof}

Our second note that there is a simple comparison of $\mathrm{width}_2$ and $\mathrm{width}_3$. As we can see on Fig. \ref{fig:fillmore_leichtweiss} these functions correspond to a direction of the plane defined by a half-line starting from a given point $O$ of the plane. Here we assume that the origin $O$ is in the interior of the body $K$.  (We can consider that the origin is an arbitrary point of the plane, but more comfortable if the getting support functions have only positive values, respectively. In the case of Fillmore's paper this follows from the fact that the domain of the investigated paracycles contains $K$ and Leichtweiss directly assumed that the origin is in the interior of $K$.)
\begin{statement}\label{st:fillmore_leichtweiss}
Let $K$ be a horocycle-convex compact body of the plane with the origin in its interior. Then we have
\begin{equation}\label{ineq:fillmore_leichtweiss}
\mathrm{width}_{2}(\theta)\geq \mathrm{width}_3(\theta).
\end{equation}
Moreover, equality holds if and only if the touching points $z$ and $v$ are the endpoints of a double-normal of the boundary of $K$.
\end{statement}

\begin{figure}[ht]
\includegraphics[scale=0.6]{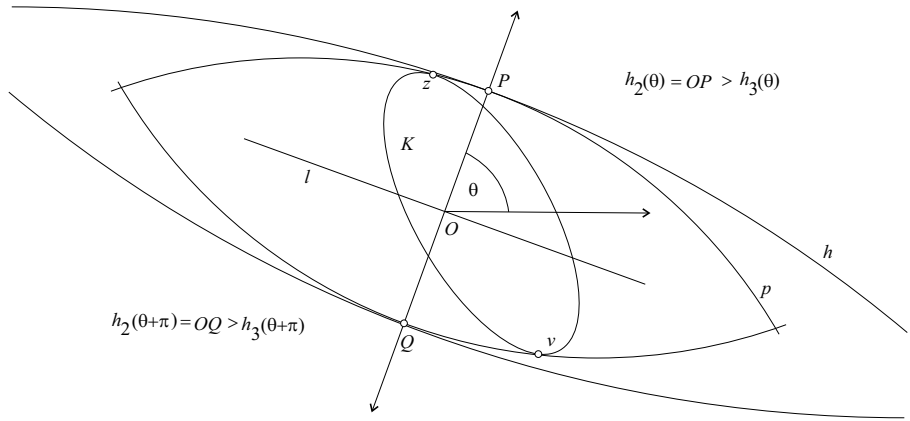}
\caption{Connection between the width function of Fillmore and the width function of Leichtweiss}\label{fig:fillmore_leichtweiss}
\end{figure}

\begin{proof}
 Let denote by $h_2(\cdot)$ and $h_3(\cdot)$ the corresponding support functions. If $\theta$ any parameter determining the direction of the half-line $\overrightarrow{OP}$ then  $\mathrm{width}_2(\theta)=h_2(\theta)+h_2(\theta+\pi)=|OP|+|OQ|$. In Fig.\ref{fig:fillmore_leichtweiss} $p$ is that paracycle which touches $K$. One of its axis is $\overrightarrow{OP}$. The angle $\theta$ also corresponds to a value of the support function $h_3(\theta)$ of paper \cite{leichtweiss}. Since it should be the distance of a hypercycle of the leading line $l$, which is perpendicular to the direction $\overrightarrow{OP}$ we have to consider the hypercycle $h$ which goes through the point $P$. However, $h$ doesn't touch $K$, because its curvature is less than the curvature of the paracycle $p$. This means that $h_3(\theta)\leq h_2(\theta)$ consequently $\mathrm{width}_3(\theta)\leq \mathrm{width}_2(\theta)$ as we stated. Equality holds for a $\theta$ if and only if $P=z$ and $Q=v$ meaning that the normals at the touching points go through the origin $O$, respectively. Hence the segment $[z,v]$ is a double-normal of the boundary of $K$ (and so it is also an isometric-diameter).
\end{proof}

\section{A new width function of the hyperbolic plane}

In this section, we propose a new definition of the width function. We eliminate that dependence of the width function on the origin, which is present in the concept $\mathrm{width}_3$. Using again hypercycles we define such strips which leading-lines don't go through a fix origin. Denote by $\mathcal{I}$ the set of ideal points. (To name "ideal point" in our terminology means a point on the boundary of a disk model. In a model-independent approach an ideal points is such a new object which corresponds to a congruence class of hyperbolic parallel half-lines.)

\begin{defi}
Let $O$ be a point in the interior of the convex compact body $K$. Let $\overrightarrow{OX}$ be a half-line with ideal point $X$ and consider a line $YX$ where $Y\in \mathcal{I}$, $YX$ is parallel to $\overrightarrow{OX}$ and intersects $K$. Then there are two hypercycles corresponding to the leading-line $YX$, which touch $K$ and the strip between them contains $K$. The distance of these distance-curves are $d_K^+(YX)$ and $d_K^-(YX)$, respectively. Let denote by $d_K(YX):=d_K^+(YX)+d_K^-(YX)$. The \emph{width of $K$ in the direction of $X$} is
\begin{equation}\label{eq:width}
\mathrm{width}_K(X):=\sup\{d_K(YX) : Y\in \mathcal{I} \}.
\end{equation}
\end{defi}

It follows by a standard compactness argument, that for an ideal point $X$ there is at least one $Y^\star$ such that $\mathrm{width}_K(X)=d_K(Y^\star X)$. So the supremum in (\ref{eq:width}) is a maximum.

\begin{remark}
In Euclidean geometry the width in a direction means the needed size of a gate on which we can push across the body along the given direction (without any rotation). In hyperbolic geometry, we imagine this gate at an ideal point. We can push the body along several distinct half-lines corresponding to this point. The gate has to be enough big to that, the passage through the gate should be successful regardless of which route we used. This explains why we chose supremum instead of infimum in our definition.
\end{remark}

\begin{statement}\label{st:lecthweissantalo}
For every ideal point $X$ holds the inequality: $\mathrm{width}_K(X)\geq \mathrm{width}_3(X)$. Moreover, for every $X\in \mathcal{I}$ we can chose a point $z(X)$ on the boundary of $K$ for which holds the inequality $\mathrm{width}_K(X)\geq \mathrm{width}_1(z(X))$. If $K$ is strictly convex and smooth then there is a homeomorphism $\varphi:\mathcal{I}\longrightarrow \mathrm{bd}(K)$ $\varphi(X)=z(X)$ such that $\mathrm{width}_K(X)\geq \mathrm{width}_1(\varphi(X))$.
\end{statement}

\begin{proof}
It is clear, that if $O'$ any point of the interior of $K$ then all half-line $\overrightarrow{O'X}$ is a subset of such a line $YX$ which is parallel to $\overrightarrow{OX}$. Hence  $\mathrm{width}_K(X)\geq \mathrm{width}_3(\overrightarrow{OX})$ holds with respect to any pair of $O$ and $\overrightarrow{OX}$.

\begin{figure}[ht]
\includegraphics[scale=0.6]{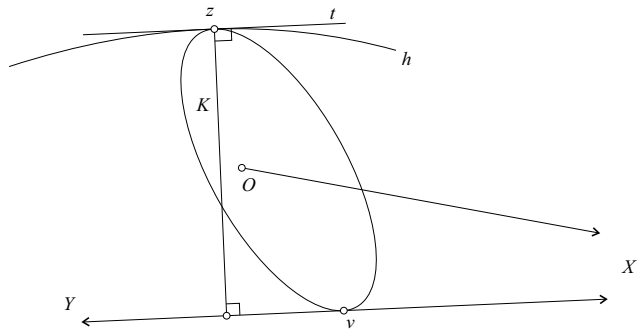}
\caption{Comparisation of the width function of Santal\'o with $\mathrm{width}_K(\cdot)$}\label{fig:santalo_width}
\end{figure}

For a given direction $\overrightarrow{OX}$ let $Y$ such that $YX$ touches $K$ at the point $v$. One of the corresponding hypercycles is the line $YX$ and the other touches the boundary of $K$ at the point $z$. Now the distance of the point $z$ to $YX$ is $d_K(YX)$ which also represents $\mathrm{width}_1(z)$ (see Fig.\ref{fig:santalo_width}). Hence if we correspond to the direction $\overrightarrow{OX}$ the above point $z(X)$ on the boundary of $K$ we get that $\mathrm{width}_1(z(X))\leq \mathrm{width}_K(X)$.

To prove the last statement consider a Cayley-Klein model. In this model the smooth and strictly convex body has the same property. Obviously we can assume that $K$ contains the origin $O$ of the model. For a point $X$ of the model circle we can associate two points, which are the tangent points of the tangent lines of $K$ through $X$. We start from that point $X_0$ of the model circle for which $\overrightarrow{OX_0)$ is the positive half of the axis $x$ of an Euclidean coordinate system. Let $\varphi(X_0}$ that tangent point for which the line $\varphi(X_0)X_0$ has positive directional tangent. Hence the triangle $O\varphi(X_0){X_0}_{\triangle}$ has positive (counterclockwise) direction and by definition we choose that tangent point as the $\varphi(X)$ image of $X$ for which this property holds. Clearly our conditions on the boundary of $K$ guarantees that $\varphi$ is a homeomorphism.

\end{proof}

We have also an immediate connection between $\mathrm{width}_4(z)$ and our function $\mathrm{width}_K(X)$.

\begin{statement}\label{st:isometricdiam}
Let $K$ be a h-convex compact body. For every point $X\in\mathcal{I}$ there is a point $Y\in \mathcal{I}$ such that the supporting strip of the leading line $YX$ touches $K$ in the endpoints $z,w$ of an isometric diagonal. Conversely, for every isometric diameter $[z,w]$ there is a line $YX$ with $X,Y\in\mathcal{I}$ for which  the respective touching points of the supporting strip of $YX$ are $z$ and $w$. Moreover for corresponding pairs $X,Y\in \mathcal{I}$ and $z,w\in\mathrm{bd}(K)$ holds $\mathrm{width}_K(X)\geq d_K(YX)\geq\mathrm{width}_4(z)=\mathrm{width}_4(w)$, with equality in the second inequality if and only if the isometric diameter $[z,w]$ is a double-normal.
\end{statement}

\begin{proof}
Let $Y\in \mathcal{I}$ such that holds the equality $d_K^+(YX)=d_K^-(YX)$. (The continuity property of the distance function implies the existence of such line $YX$.) Denote by $z$ and $w$ a pair of touching points lying in the distinct bounding hypercycles. Then $z$ and $w$ are opposite points in the sense of \cite{jeronimo-castropaper}. In fact, the triangles  $zT_zM_{\triangle}$ and $wT_wM_{\triangle}$ in the left hand figure of Fig. \ref{fig:jeronimo-castro_width} are congruent so the chord $[z,w]$ has equal alternate angle with the corresponding tangent lines $t_z$ and $t_w$. Since these lines disjoint to this strip that their distance is greater than the width of it. Equality holds if and only if the isometric diameter $[z,w]$ is a double-normal of $K$.

\begin{figure}[ht]
\includegraphics[scale=0.6]{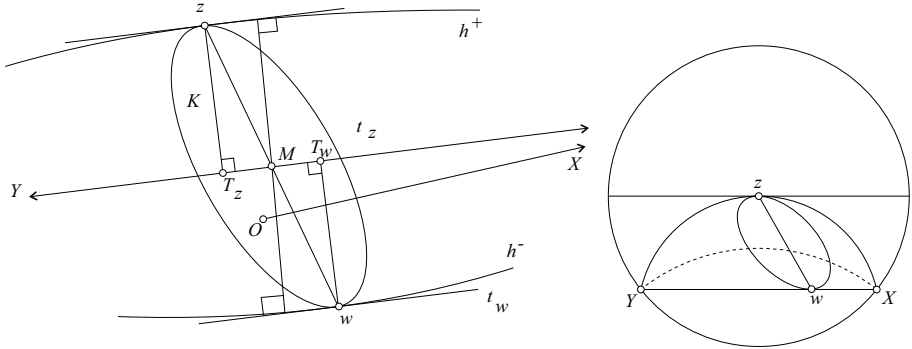}
\caption{Illustration of the proof of Statement\ref{st:isometricdiam}}\label{fig:jeronimo-castro_width}
\end{figure}

Conversely, we prove that every isometric diameter of a h-convex body can be get with the above method. Precisely, if $[z,w]$ is an isometric diameter there is a line $YX$ for which $\mathrm{dist}(z,XY)=\mathrm{dist}(w,XY)$. See the picture on the right hand side of Fig. \ref{fig:jeronimo-castro_width}. This shows the isometric diameter $[z,w]$ in the Poincare's disk model. Let $X$ and $Y$ be the respective intersections of the Euclidean tangent line at the point $w$ and the model circle. Then the Euclidean line $XY$ represents an hypercycle of the hyperbolic plane corresponding to the leading hyperbolic line with ideal points $X$ and $Y$. Using an inversion to the absolute (to the circle of ideal points) we get from this hypercycle its opposite pair which has the same distance from the hyperbolic line $YX$. Inversion sends the Euclidean line $YX$ to a circle which goes through the origin, the intersections with the model circle are $X$ and $Y$ and tangents $K$ at $z$. The only question is that this strip contains or not the body $K$. If $K$ is a h-convex body then its curvature at the point $z$ is greater than the curvature of the hypercycle $YzX$ and the strip contains the body, as we stated.
\end{proof}

Note that if $K$ is a polygon the two endpoints of a double-normal typically cannot be considered to the touching pair of points of a strip which is bounded by hypercycles of the same leading line.

We call a strip \emph{symmetric to its leading line} if the boundary hypercycles have the same distance from the leading line.

\begin{corollary}\label{cor:equivalence}
Let $K$ be a h-convex compact body. Assume that the strip with leading line $YX$ touches $K$ at the points $z$ and $w$, respectively. From the proof of Lemma \ref{st:isometricdiam} we can see easily that the following properties are equivalent to each other:
\begin{itemize}
\item $[z,w]$ is a double normal of $K$
\item $[z,w]$ is orthogonal to $YX$.
\end{itemize}
When this orthogonality property holds the chord $[z,w]$ is an isometric diameter.
If $[z,w]$ isn't orthogonal to $YX$ then it is an isometric diameter if and only if the strip symmetric with respect to the leading line $YX$.
\end{corollary}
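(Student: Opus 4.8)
The plan is to reduce the whole statement to one elementary property of distance curves: at a point $p$ of a hypercycle the geodesic from $p$ perpendicular to the leading line is orthogonal to the tangent of the hypercycle at $p$. Since the bounding hypercycles of the strip touch $K$ at $z$ and $w$, the supporting lines $L_z,L_w$ of $K$ there coincide with the tangents of the two hypercycles, so if $T_z,T_w$ denote the feet of the perpendiculars from $z,w$ to the leading line $YX$, I may use throughout that $[z,T_z]\perp L_z$ and $[w,T_w]\perp L_w$. Everything else will be triangle geometry.

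First I would settle the equivalence of the two bulleted conditions. If $[z,w]\perp YX$, the geodesic carrying $[z,w]$ meets $YX$ perpendicularly in a single point, so the perpendiculars from $z$ and from $w$ both lie on this geodesic and $T_z=T_w$; then $[z,w]$ runs along $[z,T_z]$ and along $[w,T_w]$, and by the orthogonality above it is perpendicular to both $L_z$ and $L_w$, i.e. a double normal. For the converse, a double normal satisfies $[z,w]\perp L_z$, and since the perpendicular to $L_z$ at $z$ is the unique line $zT_z$, the chord lies along $zT_z$ and is thus perpendicular to $YX$. In this orthogonal situation both tangent--chord angles are right angles, hence equal, so $[z,w]$ is automatically an isometric diameter; I would also point out that here $|zT_z|=d_K^+$ and $|wT_w|=d_K^-$ need not agree, which is precisely why symmetry of the strip enters only in the remaining case.

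For the last assertion I would assume $[z,w]$ is not orthogonal to $YX$, so $T_z\ne T_w$ and, with $M$ the midpoint of $[z,w]$, the right triangles $zT_zM$ and $wT_wM$ are nondegenerate with right angles at $T_z,T_w$ and common hypotenuse length $|zM|=|wM|$. Using $L_z\perp zT_z$ and $L_w\perp wT_w$ I would rewrite the tangent--chord angles as $\tfrac{\pi}{2}-\angle MzT_z$ and $\tfrac{\pi}{2}-\angle MwT_w$; then ``$[z,w]$ is an isometric diameter'' becomes $\angle MzT_z=\angle MwT_w$, while ``the strip is symmetric'' becomes $|zT_z|=|wT_w|$. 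The two hyperbolic congruence criteria for right triangles then close the equivalence in both directions: equal hypotenuse plus one equal acute angle, and equal hypotenuse plus one equal leg, each force the two triangles to be congruent. This is the same congruence of $zT_zM$ and $wT_wM$ already exploited in the proof of Statement \ref{st:isometricdiam}, now run in both directions.

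The step I expect to need the most care is the angle bookkeeping in the non-orthogonal case. I must verify that at each endpoint the chord direction lies inside the right angle spanned by the tangent and the perpendicular, so that the tangent--chord angle is genuinely $\tfrac{\pi}{2}-\angle MzT_z$ and not its supplement, and that $z$ and $w$ lie on the two distinct bounding hypercycles, hence on opposite sides of $YX$, so that the equality $\angle MzT_z=\angle MwT_w$ expresses equal \emph{alternate} angles (the condition for opposite points) rather than equal corresponding angles. These sign and orientation checks follow from the convexity of $K$, but they are where an otherwise routine congruence argument could quietly fail.
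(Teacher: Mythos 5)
Your first half is sound and is essentially the argument the paper extracts from the proof of Statement \ref{st:isometricdiam}: the tangent of a bounding hypercycle at a point is orthogonal to the distance geodesic dropped to the leading line, which immediately gives the equivalence of the two bullets and shows that an orthogonal chord is an isometric diameter (both tangent--chord angles being right). The gap is in the final equivalence, and it is concrete: you take $M$ to be the \emph{midpoint} of $[z,w]$ and then assert that $zT_zM$ and $wT_wM$ are right triangles with right angles at $T_z$, $T_w$ and hypotenuses $zM$, $wM$. But $\angle zT_zM$ is a right angle only if the ray $T_zM$ lies along $YX$, i.e.\ only if $M\in YX$; for the midpoint of the chord this happens precisely when the strip is symmetric, which is one of the two conditions you are trying to characterize, so the setup is circular. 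Moreover, with $M$ the midpoint the ``common hypotenuse length'' $|zM|=|wM|$ is true by definition and carries no geometric content, so your hypotenuse--angle and hypotenuse--leg steps degenerate into SSA- and SAS-with-the-wrong-data claims about triangles that are not known to be right triangles; these are not congruence criteria.

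The repair is small and lands exactly on the paper's triangles: let $M$ be the point where the chord $[z,w]$ meets $YX$, which exists because $z$ and $w$ lie on the two distinct bounding hypercycles and hence on opposite sides of $YX$. Then $zT_zM$ and $wT_wM$ genuinely have right angles at $T_z,T_w$, and their angles at $M$ are equal vertical angles. If the strip is symmetric, $|zT_z|=|wT_w|$ and AAS gives congruence, hence equal angles at $z$ and $w$ (and, a posteriori, $M$ is the midpoint); if $[z,w]$ is an isometric diameter, the angles at $z$ and $w$ are equal, so all three pairs of angles agree and hyperbolic AAA gives congruence, hence $|zT_z|=|wT_w|$. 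The orientation issues you flag (alternate versus corresponding angles, the chord lying inside the right angle between tangent and perpendicular) are legitimate but secondary; the location of $M$ is the step that actually fails as written.
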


\begin{statement}\label{st:widthfunctionofasegment}
Let $AB$ be a segment of length $d$ and take its figure in a projective model on the horizontal diameter in symmetric position (see Fig.\ref{fig:widthofasegment}). The corresponding ideal points of the line $AB$ are $X_{{AB}}$ and $X_{{BA}}$, respectively. Denote by $X_1,X_2\in \mathcal{I}$ those ideal points for which $X_1BA_{\angle}=X_2AB_{\angle}=\pi/2$. We say that the ideal points $X'$ and $X''$ holds the inequality $X'<X''$, if there are on the upper half of the model and in counterclockwise direction the point $X'$ precedes the point $X''$. Finally, denote by $\alpha$ the angle $XAB_{\angle}$ for $X<X_1$ and by $\beta$ the angle $XBA_{\angle}$ if $X>X_2$. Then the width function of the segment $AB$ is
\begin{equation}\label{eq:segmentwidth}
\mathrm{width}_{AB}(X)=\left\{\begin{array}{ccc}
                                      0 & \mbox{ if } & X=X_{AB} \\
                                     \sinh^{-1}\left(\frac{\sin\alpha \sinh d}{\cosh d-\cos\alpha \sinh d}\right)  & \mbox{ if } & X_{AB}<X\leq X_1 \\
                                      d & \mbox{ if } &  X_1<X\leq X_2 \\
                                      \sinh^{-1}\left(\frac{\sin\beta \sinh d}{\cosh d-\cos\beta \sinh d}\right) & \mbox{ if } & X_2<X<X_{BA}\\
                                      0 & \mbox{ if } & X=X_{BA}.
                                    \end{array} \right.
\end{equation}
The width function can be extend symmetrically to the other directions of the plane.
\end{statement}

\begin{proof}
Let $K$ be the closed segment $AB$. Let $X\in \mathcal{I}$ arbitrary ideal point and take those lines $YX$ which belong to the sector bounding by $AX$ and $BX$. The triangle $ABX_\triangle$ has one ideal vertex at $X$ so it can be determined by the length $d=AB$ and the angle $\alpha=BAX_\angle$. Assume that $BAX_\angle \le ABX_\angle$. A line $YX$ should take into consideration if its angle $XMB_\angle$ is greater than $\alpha$ and is less or equal to $XB{X_{AB}}_\angle$ (see Fig. \ref{fig:widthofasegment}).

\begin{figure}[ht]
\includegraphics[scale=0.6]{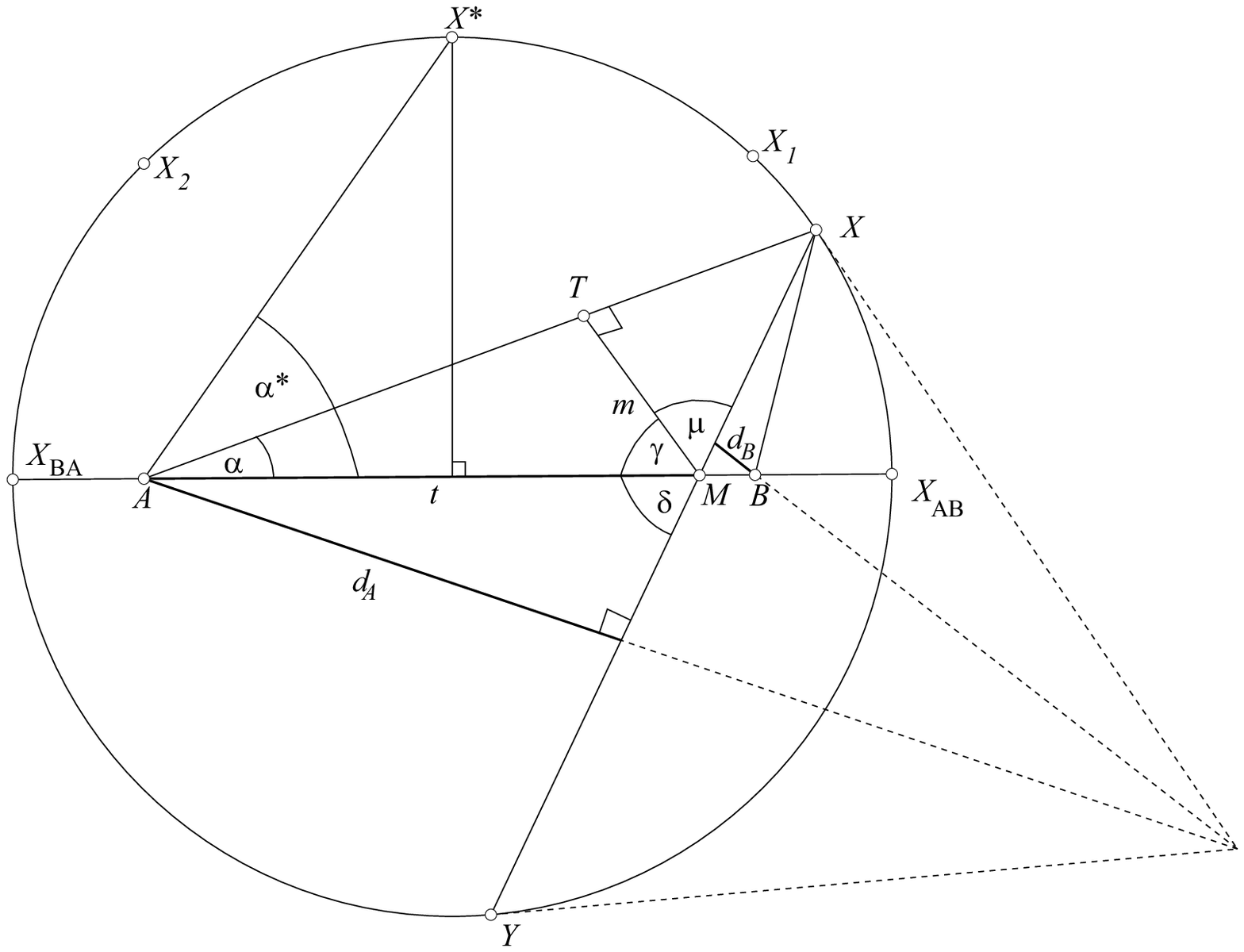}
\caption{}\label{fig:widthofasegment}
\end{figure}

Clearly, for fixed point $X$ we have to determine the sum of the distances $d_A+d_B$ of the points $A$ and $B$ from the line $YX$ and have to take the supremum of these values varying the length $t=|AM|$ between the bounds $0$ and $d$. To this we introduce some notation. Let $X^\star$ be the upper ideal point of the bisector of the segment $AB$. Then the parallel angle $\alpha^\star$ holds the equality
$$
\cos \alpha^\star =\tanh \frac{d}{2}.
$$
We have two symmetries in the calculation, the first one is with respect to the line of $AB$ and the second one is with respect to the bisector of $AB$, meaning that we can assume that $\alpha\leq \alpha^\star$. Denote by $t$ the length of the segment $AM$ and consider a perpendicular from $M$ to $AX$. Its length is $m$. This segment divides the angle $AMX_\angle$ into two parts; one of them say $\gamma$ is the second acute angle of a right triangle, the other one is the parallel angle $\mu$ corresponding to the distance $m$. Hence we have
$$
\cos \mu=\tanh m \quad  \quad \sin \mu=\frac{1}{\cosh m}.
$$
From the right triangle $AMT_\triangle$ we get the equalities
$$
\cos \gamma = \frac{\tanh m}{\tanh t} \quad  \quad \sin \gamma =\frac{\sqrt{\tanh ^2t-\tanh ^2 m}}{\tanh t}.
$$
By the further notation of the figure, we get
$$
\sin \delta =\sin (\pi-\delta)= \frac{\tanh m}{\cosh m\tanh t}+\frac{\tanh m\sqrt{\tanh ^2t-\tanh ^2 m}}{\tanh t}.
$$
Finally, from the right triangle $AMT_\triangle$ we get
$$
\sinh m=\sin \alpha \sinh t \quad \cosh m=\sqrt{1+\sin^2\alpha \sinh^2 t} \quad \tanh m=\frac{\sin \alpha \sinh t}{\sqrt{1+\sin^2\alpha \sinh^2 t}}.
$$
Now we can deduce the following formula
$$
\sin \delta =\frac{\sin \alpha (\cosh t+\cos\alpha \sinh t)}{1+\sin^2\alpha \sinh^2t}=\frac{\sin \alpha} {\cosh t-\cos\alpha \sinh t}.
$$
Our function is
\begin{equation}\label{eq:functionf}
f(t):=d_A+d_B=\sinh^{-1}\left(\sin \delta \sinh t\right)+\sinh^{-1}\left(\sin \delta \sinh (d-t)\right)=
\end{equation}
$$
\sinh^{-1}\frac{\sin \alpha \sinh t}{\cosh t-\cos\alpha \sinh t}+\sinh^{-1}\frac{\sin \alpha \sinh (d-t)}{\cosh t-\cos\alpha \sinh t}.
$$
The derivative of $f(t)$ is
$$
f'(t)=\frac{\sin \alpha }{\left(\cosh t-\cos\alpha \sinh t\right)\sqrt{\left(\cosh t-\cos\alpha \sinh t\right)^2+\sin^2\alpha \sinh^2 t}}+
$$
$$
\frac{\sin\alpha \left(-\cosh d+\cos \alpha \sinh d\right)}{\left(\cosh t-\cos\alpha \sinh t\right)\sqrt{\left(\cosh t-\cos\alpha \sinh t\right)^2+\sin^2\alpha \sinh^2 (d-t)}}.
$$
Since $\cosh t-\cos\alpha \sinh t>0$ holds for all positive $t$ and also we assumed that $\sin\alpha >0$, the condition $f'(t)=0$ is equivalent to the following equality:
$$
\cosh d-\cos \alpha \sinh d=\frac{\sqrt{\left(\cosh t-\cos\alpha \sinh t\right)^2+\sin^2\alpha \sinh^2 (d-t)}}{\sqrt{\left(\cosh t-\cos\alpha \sinh t\right)^2+\sin^2\alpha \sinh^2 t}}.
$$
Introducing the notation $x(t):=\cosh t-\cos\alpha \sinh t$ we can write
$$
x^2(d)-1=\frac{\sin^2\alpha \left(\sinh ^2(d-t)-\sinh^2 t\right)}{x^2(t)+\sin^2\alpha \sinh^2t}.
$$
This leads to
$$
\left(\sinh^2d(1+\cos^2\alpha)-\cos\alpha \sinh (2d)\right)\left(\cosh 2t-\cos\alpha \sinh 2t\right)=
$$
$$
\sin^2\alpha \left(\sinh^2d\cosh(2t)-\sinh(2t)\sinh d\cosh d\right),
$$
from which straightforward calculation shows the following equivalent form:
$$
0=\left(\cosh d-\cos\alpha \sinh d\right)\left((1+\cos^2\alpha)\sinh (2t)-2\cos\alpha \cosh (2t)\right).
$$
Since the first term is always positive we get that the only zero is attend at
$$
\tanh (2t)=\frac{2\cos\alpha}{1+\cos^2\alpha}.
$$
From this $\tanh t=\cos\alpha$, therefore the function $f(t)$ has one maximum at the point $t=\tanh^{-1}\left(\cos\alpha\right)$. It can be seen that at this value $\delta =\pi/2$, so the triangle $AXM_\triangle$ is a right one. Of course, the fix value $d$ sometimes less than that $t$ where the maximum is attend, in this case from that $t$'s which we allowed, $t=d$ produces the larger widthness.
\end{proof}

\section{Bodies of constant width}

One of the most interesting question with respect to the width function is the characterization problem of bodies of constant width. In this section we investigate this question. First we determine the with function of an important body, the with function of a symmetric bounded hypercycle domain.

\begin{statement}\label{st:hypercycledomain}
The width function of the symmetric hypercycle domain $ABCD$ is a combination of the width functions of the "diagonal chords" $AC$ and $BD$. Using the notation of Fig.\ref{fig:hypercycledomain}  we have on the arc $X_0\leq X\leq X'$ the following function
\begin{equation}\label{eq:hypercycledomainwidth}
\mathrm{width}_{K}(X)=\max\{\mathrm{width}_{AC}(X), \mathrm{width}_{BD}(X)\}=\left\{\begin{array}{ccc}
                                      \sinh^{-1}\frac{\tanh h(\cosh a+\sinh a)}{\sqrt{\cosh^2a\cosh^2h-1}} & \mbox{ if } & X_0<X\leq X_1 \\
                                       d & \mbox{ if } &  X_1<X\leq X_2 \\
                                       \tanh^{-1}\cos(\gamma+\delta) & \mbox{ if } & X_2<X<X'
                                     \end{array} \right. .
\end{equation}
The complete function has rotational symmetry of order 4.
\end{statement}

\begin{proof}
Let $K$ be a convex, compact body and let $X$ be a direction. Then we have
$$
\mathrm{width}_K(X)=\sup\{ \mathrm{width}_{AB}(X) \quad : \quad A,B\in K\}.
$$
From this we can deduce the width function of a symmetric hypercycle domain $D$. Let its height $2h$ meaning that its leading line has distance $h$ from the points of the hypercycle arcs, respectively (see the left picture in Fig.\ref{fig:hypercycledomain}).
\begin{figure}[ht]
\includegraphics[scale=0.6]{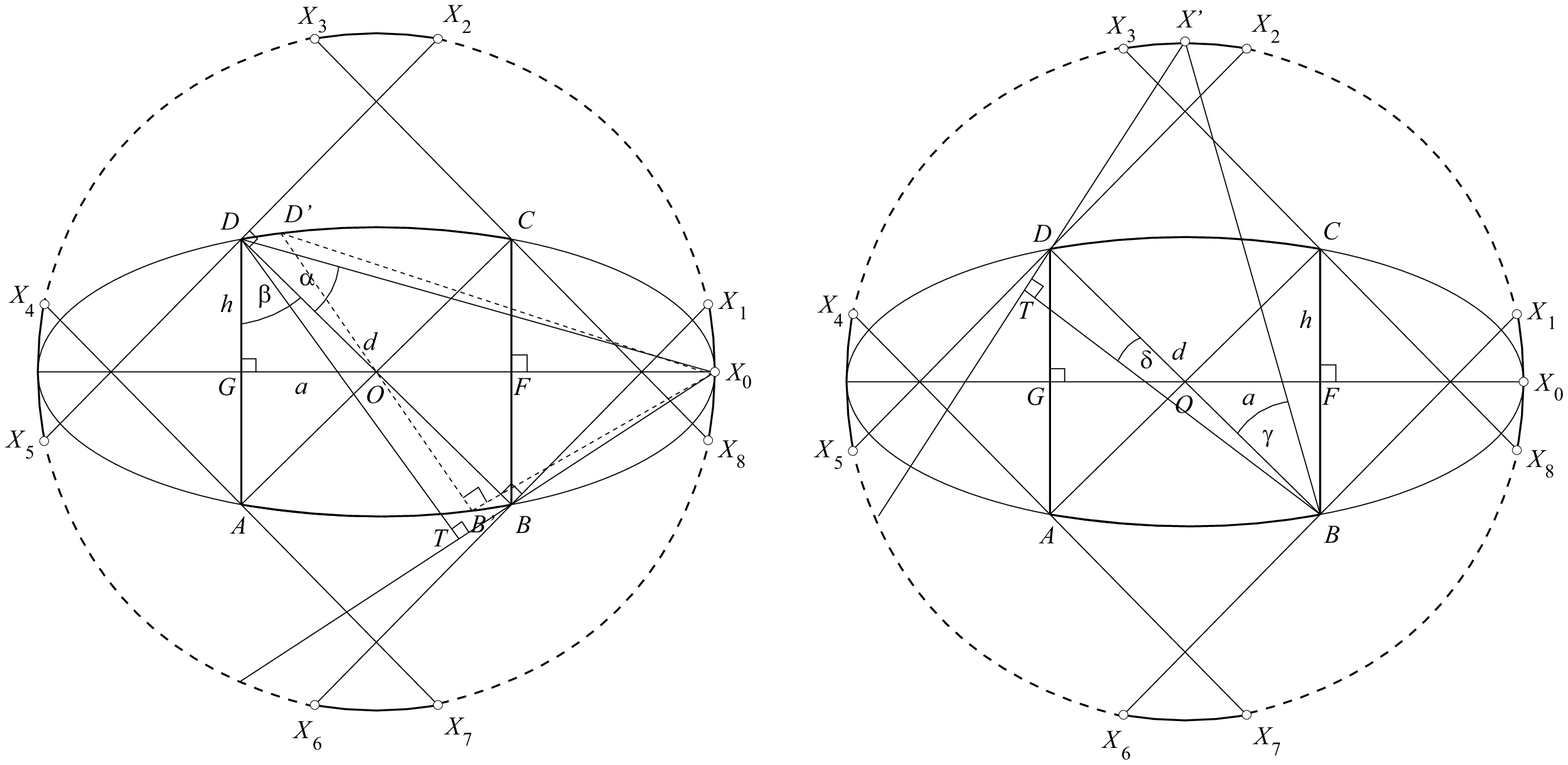}
\caption{}\label{fig:hypercycledomain}
\end{figure}
The chord $FG$ on the leading line has length $2a$. Clearly $AC$ and $BD$ are the longest chords in the domain, they common length is $d=2\cosh^{-1}\left(\cosh a\cosh h\right)$. Observe that any hypercycle domain which contains these two segments also contains those segments which one endpoint is  $AB$ and the other one is in $CD$. Hence we have to concentrate those segments which ends lying in the hypercycle arcs.
From those ideal points which  are on the dashed arcs on Fig.\ref{fig:hypercycledomain} the value of the width is $d$ by Lemma\ref{st:widthfunctionofasegment}. The question is the following: What can we say about the widthness from the points on the bold arcs of the model circle? Clearly, we have $\mathrm{width}_D(X_0)\geq 2h$. On the other hand we can calculate the width of the segment $BD$ from this direction, too. In fact, Lemma\ref{st:widthfunctionofasegment} says, that
$$
\mathrm{width}_{BD}(X)= \sinh^{-1}\left(\frac{\sin\alpha \sinh d}{\cosh d-\cos\alpha \sinh d}\right),
$$
where $\alpha+\beta$ is the parallel-angle of the distance $h$ and $\sin\beta=\frac{\sinh a}{\sinh d/2}$. Since $\cos(\alpha+\beta)= \tanh h$ we get
$$
\tanh h=\cos \alpha\cos \beta -\sin\alpha \sin\beta=\cos \alpha \sqrt{1-\frac{\sinh^2 a}{\sinh^2 \frac{d}{2}}}-\sin\alpha \frac{\sinh a}{\sinh \frac{d}{2}}.
$$
By the hyperbolic theorem of cosine $\cosh\frac{d}{2}=\cosh a\cosh h$ and we get that
$$
\sin\alpha =\frac{\tanh h(\cosh a-\sinh a)}{\sinh\frac{d}{2}},
$$
which gives that
$$
\cos \alpha =\frac{\cosh a\sinh^2h +\sinh a}{\cosh h\sinh \frac{d}{2}}.
$$
We can use formula (\ref{eq:segmentwidth}) to determine the width:
$$
\mathrm{width}_{BD}(X_0)=\sinh^{-1}\frac{\sin\alpha \sinh d}{\cosh d-\cos\alpha \sinh d}.
$$
Long but straightforward calculation shows that
$$
\mathrm{width}_{BD}(X_0)=\sinh^{-1}\frac{\tanh h(\cosh a+\sinh a)}{\sinh\frac{d}{2}}=\sinh^{-1}\frac{\tanh h(\cosh a+\sinh a)}{\sqrt{\cosh^2a\cosh^2h-1}}.
$$
For fixed value $h$ we can use the above formula to determine the width of the chord $B(t)D(t)$ as a function of $a(t)$. The least possible value of $a(t)$ attend at that $a'$ when the corresponding ${B'D'X_0}_\angle$ is the parallel angle of the value $d'$. From this we get
$$
\frac{\tanh h(\cosh a'-\sinh a')}{\sinh\frac{d'}{2}}=\sin\alpha'=\frac{1}{\cosh d'}=\frac{1}{\cosh^2 \frac{d'}{2}+\sinh^2 \frac{d'}{2}}.
$$
If $a(t)\leq a'$ from (\ref{eq:segmentwidth}) we get that
$$
d(t)=\mathrm{width}_{B(t)D(t)}(X_0)\leq \sinh^{-1}\frac{\tanh h(\cosh a'+\sinh a')}{\sinh\frac{d'}{2}}=\sinh^{-1}\left(\frac{1}{\cosh d'}+2\frac{\sinh a'}{\sinh\frac{d'}{2}}\right).
$$
The width a chord from $X_0$ which connects two points of the opposite hypercycle is equal to the width from $X_0$ of such a chord which goes through $O$. So we get
$$
\mathrm{width}_{K}(X_0)= \mathrm{width}_{BD}(X_0)=\sup\left\{\mathrm{width}_{B(t)D(t)}(X_0) : 0\leq a(t)\leq a\right\}.
$$
The same true for the other points of the arc $X_0X_1$.

Assume that $X_2<X<X_3$ holds for the point $X$ (see the right picture in Fig. \ref{fig:hypercycledomain}). We can observe that from these points the width of $K$ is always equal to the width of one of the segments $AC$ and $BD$. On the figure, we drew that situation when
$\mathrm{width}_{BD}(X)=BT$ implying the equality
$$
\tanh (\mathrm{width}_{BD}(X))=\tanh BT =\cos(\gamma+\delta),
$$
where we used the notation of the figure. This proves the statement.
\end{proof}

\begin{figure}[ht]
\includegraphics[scale=0.6]{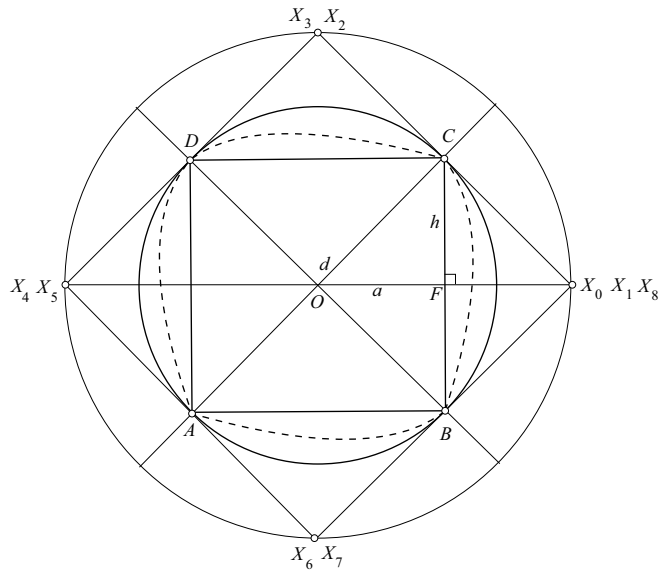}
\caption{Bodies of constant width}\label{fig:constwidthcurves}
\end{figure}

\begin{defi}\label{def:constanwidth}
We say that the compact convex body $K$ has \emph{constant width} if there is a $\lambda\in \mathbb{R}^+$ that for every ideal point $X$ $\mathrm{width}_K(X)=\lambda$ holds.
\end{defi}

\begin{remark}\label{rem:constwidthcurves}
There is an interesting consequence of the Statement \ref{st:hypercycledomain}. When the arcs $\{X : X_8\leq X\leq X_1\}$ and $\{X : X_2\leq X\leq X_3\}$ degenerate to a point the width function of the hypercycle domain has constant value. In this case, $\tanh (d/2)=\cos \pi/4=\sqrt{2}/{2}$, from which
$\cosh^2d/2-1=\sinh^2 d/2=(1/2)\cosh^2d/2$ implying that $\cosh d/2=\sqrt{2}$ and  the unique value of the width function is $d=2\cosh^{-1}(\sqrt{2})=2\ln (\sqrt{2}+1)$. Since $\sqrt{2}=\cosh a\cosh h$ and also $\sinh h=\sqrt{2}/{2}\sinh d/2=\sqrt{2}/{2}$ we get that $a=\cosh^{-1}\sqrt{\frac{4}{3}}$ and $h=\cosh^{-1}\sqrt{\frac{3}{2}}$. This phenomenon exclude the characterization of bodies of constant width, because each convex body belonging to a circular disk of diameter $d=2\ln (\sqrt{2}+1)$ and containing the endpoints of a pair of orthogonal diameters is a convex body of constant width. (E.g. see in Fig. \ref{fig:constwidthcurves} the circle, the quadrangle with equal angles and sides, and the body which bounded by the dotted curve, respectively.)
\end{remark}

The following statement generalises this observation.

\begin{statement}\label{st:regularpoly}
Let $k\geq 2$ an integer number. Assume that the radius $r$ of the circumscribed circle of a regular $n$-polygon $P$ holds
$$
\tanh r \geq  \tanh r_{\min}:=\left\{\begin{array}{ccc}
      \frac{-1+ \sqrt{1+4\sin\frac{\pi}{2k-1}\cot\frac{(k-1)\pi}{2k-1}}} {2\cos\frac{(k-1)\pi}{2k-1}}  & \mbox{ if } & n=2k-1,\\
             & & \\
      \frac{1}{\sqrt{\cos ^2\frac{(k-1)\pi}{2k} +\tan^2\frac{(k-1)\pi}{2k}}} & \mbox{ if } & n=2k.
\end{array}\right.
$$
Then $P$ is a body of constant width $d$, where
$$
\sinh \frac{d}{2}=
\left\{\begin{array}{ccc}
                      \frac{\sinh r}{\sin\frac{(k-1)\pi}{2k-1}} & \mbox{ if } & n=2k-1,\\
                      & & \\
                      \frac{\sinh r}{\sin \frac{(k-1)\pi}{2k}}  & \mbox{ if } & n=2k.
\end{array}\right.
$$
\end{statement}

\begin{proof}
If $n=2k$ is an even number then we can use the train of though of Remark \ref{rem:constwidthcurves}.
\begin{figure}[ht]
\includegraphics[scale=0.6]{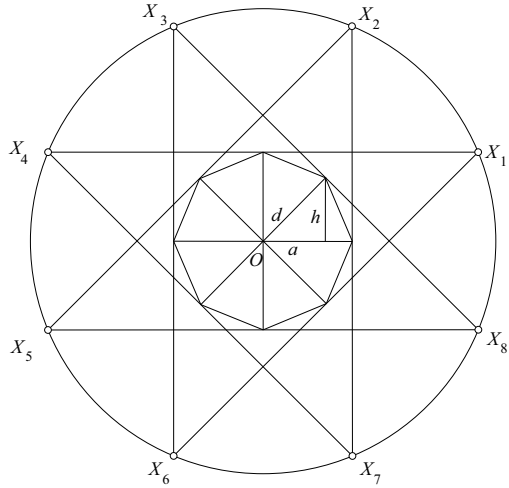}
\caption{Regular $2k$-polygon of constant width}\label{fig:constwidthregcurves}
\end{figure}
Put the polygon into the center of a projective model with a large radius similarly as you can see on Fig. \ref{fig:constwidthregcurves}. Consider those pairs of support lines of $P$ which go through the opposite vertices of a diameter. This $2k$ bands intersect the boundary of the model those ideal points from which at least one diameter has width $d$. We determine the radius of the model such a way that the getting arcs cover the circle but not overlap. The common points are $X_i$ $i=1,\cdots 2k$. The endpoints of one support line are $X_iX_{i+n-1}$ where we take the indices modulo $2k$. The central angle of such a line is $\frac{(k-1)}{k}\pi$ thus the parallel angle corresponds to $d/2$ is $\frac{(k-1)}{2k}\pi$. Hence
$$
\tanh \frac{d}{2}=\cos \frac{(k-1)\pi}{2k}
$$
from which we get that
$$
\tanh d=\frac{2\cos \frac{(k-1)\pi}{2k}}{1+\cos ^2\frac{(k-1)\pi}{2k}}
$$

The corresponding radius $r$ of the circumscribed circle holds:
$$
\sinh r=\frac{\sinh \frac{d}{2}}{\sin \frac{(k-1)\pi}{2k}}=\frac{\frac{\tanh \frac{d}{2}}{\sqrt{1-\tanh ^2\frac{d}{2}}}}{\sin \frac{(k-1)\pi}{2k}}=
\frac{\cos \frac{(k-1)\pi}{2k}}{\sin ^2\frac{(k-1)\pi}{2k}},
$$
or
$$
\tanh r=\frac{\cos \frac{(k-1)\pi}{2k}}{\sqrt{\sin ^4\frac{(k-1)\pi}{2k}+\cos^2\frac{(k-1)\pi}{2k}}}=\frac{\cos \frac{(k-1)\pi}{2k}}{\sqrt{1-\sin ^2\frac{(k-1)\pi}{2k}\cos ^2\frac{(k-1)\pi}{2k}}}=\frac{1}{\sqrt{\cos ^2\frac{(k-1)\pi}{2k} +\tan^2\frac{(k-1)\pi}{2k}}}.
$$
The odd case is more complicated. A diametral chord doesn't contain the origin. It determines a set of ideal points from which the widthness of the chord is $d$. In the model these sets are the union of two non-congruent arcs, the longest one is in that half-plane of the chord which contains the origin. We guarantee the constant widthness property if the rotated copies ($n$-times by the angle $2\pi/n$) of these sets cover the whole boundary of the model. If the vertices of the polygon are $P_1,\ldots ,P_{2k-1}$, then the diametral chords are the segments $P_iP_{i+k-1}$ respectively. The diametral chords are rotated copies of each other by the integer multipliers of the angle $2\pi/(2k-1)$ around the origin. (See Fig. \ref{fig:oddcase}.)
\begin{figure}[ht]
\includegraphics[scale=0.6]{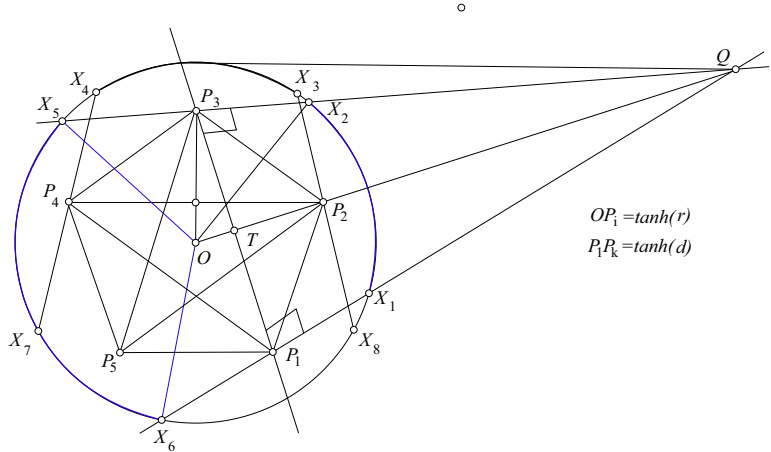}
\caption{Regular $2k-1$-polygon of constant width}\label{fig:oddcase}
\end{figure}
Denote by $X_1X_2\cup X_{2k-1}X_{2k}$ and $X_3X_4\cup X_{2k+1}X_{2k+2}$ the sets of ideal points from which the diametral chords $P_1P_k$ and $P_2P_{k+1}$ can be seen in their common real length $d$. When $X_{2k-1}X_{2k}$ and $X_{2k+1}X_{2k+2}$ overlap the rotational symmetry guarantees that the polygon is a body of constant width $d$, since for every ideal points one of the diametral chord can be seen in its real length $d$. Thus we have to determine the central angle $X_{2k-1}{OX_{2k}}_{\angle}$. The minimal value of $r$ occurs when $X_{2k-1}{OX_{2k}}_{\angle}=2\pi/(2k-1)$.

Since $P_3OQ_{\angle}= \frac{(k-1)\pi}{2k-1}$ and the radius of the model is $1$, the Euclidean distance of the diametral chords to the origin is $OT=\cos \frac{(k-1)\pi}{2k-1}\tanh r$. The pole of the chord containing the segment $P_1P_k$ is $Q$ so $OQ=(\cos \frac{(k-1)\pi}{2k-1}\tanh r)^{-1}$.
Let denote by $\alpha= \frac{(k-1)\pi}{2k-1}$ and $\beta:=OQ{P_{k}}_{\angle} $, respectively. Since $Q$, $X_2$ and $X_{2k-1}$ are collinear points we have
$QX_2\cdot QX_{2k-1}=OQ^2$ where $QX_2$ and $QX_{2k-1}$ are the roots of the polynomial equation of second order:
$$
0=x^2-2OQ\cdot x\cos \beta+OQ^2-1.
$$
Hence the two roots are $x_{1,2}=OQ\cos\beta\pm \sqrt{1-\sin^2\beta OQ^2}$, so $QX_{2k-1}=OQ\cos\beta+\sqrt{1-\sin^2\beta OQ^2}$. Now if the half of the central angle of the arc $X_{2k-1}X_{2k}$ is equal to the half of the central angle $P_1{OP_2}_{\angle}=\frac{2\pi}{2k-1}=\frac{\alpha}{k-1}$, then we get the equality:
$$
\sin \frac{\alpha}{k-1}=\sin (\pi-\frac{\alpha}{k-1})=QX_{2k-1} \sin\beta=\sin\beta \left(OQ\cos\beta+\sqrt{1-\sin^2\beta OQ^2}\right).
$$
From this equality we can determine again $OQ$ and get
$$
0=OQ^2-2\sin\frac{\alpha}{k-1}\cot \beta OQ+\frac{\sin^2\frac{\alpha}{k-1}}{\sin^2\beta}-1,
$$
from which
$$
OQ=\sin\frac{\alpha}{k-1}\cot \beta \pm \cos \frac{\alpha}{k-1}.
$$
From the Euclidean right triangle ${TQP_k}_\triangle$ we have that
$$
\cot \beta=\frac{OQ-OT}{\sin\alpha \tanh r}=\frac{\frac{1}{\cos\alpha \tanh r}-\cos\alpha \tanh r}{\sin\alpha \tanh r}= \frac{1-\cos^2\alpha \tanh^2 r}{\cos\alpha \sin\alpha \tanh^2 r},
$$
and get the equality
$$
\frac{1}{\tanh r\cos\alpha}=\sin\frac{\alpha}{k-1}\frac{1-\cos^2\alpha \tanh^2 r}{\cos\alpha \sin\alpha \tanh^2 r}\pm \cos \frac{\alpha}{k-1}.
$$
From this equation we get either the equality $(\star)$:
$$
(\star) \qquad 0=\tanh^2r \sin\frac{(k-2)\alpha}{k-1}\cos\alpha-\tanh r \sin\alpha +\sin\frac{\alpha}{k-1}
$$
or the equality $(\star \star)$:
$$
(\star \star) \qquad 0=\tanh^2r \sin\frac{k\alpha}{k-1}\cos\alpha+\tanh r \sin\alpha +\sin\frac{\alpha}{k-1}.
$$
The corresponding solutions are
$$
(\star) \qquad \tanh r= \frac{\sin\alpha\pm \sqrt{\sin^2\alpha-4\sin\frac{(k-2)\alpha}{k-1}\sin\frac{\alpha}{k-1}\cos\alpha}} {2\sin\frac{(k-2)\alpha}{k-1}\cos\alpha}
$$
and
$$
(\star \star) \qquad \tanh r= \frac{-\sin\alpha\pm \sqrt{\sin^2\alpha+4\sin\frac{k\alpha}{k-1}\sin\frac{\alpha}{k-1}\cos\alpha}} {2\sin\frac{k\alpha}{k-1}\cos\alpha},
$$
respectively. Substituting $\alpha=\frac{(k-1)\pi}{2k-1}$ we get
$$
(\star) \qquad \tanh r=\frac{\sin\frac{(k-1)\pi}{2k-1}\pm \sqrt{\sin^2\frac{(k-1)\pi}{2k-1}-4\sin\frac{(k-2)\pi}{2k-1}\sin\frac{\pi}{2k-1}\cos\frac{(k-1)\pi}{2k-1}}} {2\sin\frac{(k-2)\pi}{2k-1}\cos\frac{(k-1)\pi}{2k-1}}
$$
and
$$
(\star \star) \qquad \tanh r=\frac{-\sin\frac{(k-1)\pi}{2k-1}\pm \sqrt{\sin^2\frac{(k-1)\pi}{2k-1}+4\sin\frac{k\pi}{2k-1}\sin\frac{\pi}{2k-1}\cos\frac{(k-1)\pi}{2k-1}}} {2\sin\frac{k\pi}{2k-1}\cos\frac{(k-1)\pi}{2k-1}}.
$$
Among these solutions there is only one which have to take into consideration. In $(\star)$ (by the inequality $\tanh r<1$) we need that
$$
\left(\sin\alpha-2\sin\frac{(k-2)\alpha}{k-1}\cos\alpha\right)<\mp\sqrt{ \sin^2\alpha-4\sin\frac{(k-2)\alpha}{k-1}\sin\frac{\alpha}{k-1}\cos\alpha},
$$
from which the inequality:
$$
-4\sin\alpha\sin\frac{(k-2)\alpha}{k-1}\sin\frac{\alpha}{k-1}\cos\alpha+4\sin^2\frac{(k-2)\alpha}{k-1}\cos^2\alpha< -4\sin\frac{(k-2)\alpha}{k-1}\sin\frac{\alpha}{k-1}\cos\alpha
$$
holds. Simplifying it we get
$$
\sin\frac{(k-2)\alpha}{k-1}\cos\alpha<\sin\alpha\sin\frac{\alpha}{k-1}-\sin\frac{\alpha}{k-1}=\left(\sin\alpha-1\right)\sin\frac{\alpha}{k-1},
$$
which gives a contradiction, because the right hand side is negative and the left hand side is positive. Similarly $\tanh r>0$ implies that in $(\star \star)$ we have to take into consideration only the sign plus. Hence we have that
$$
\tanh r=\frac{-\sin\frac{(k-1)\pi}{2k-1}+ \sqrt{\sin^2\frac{(k-1)\pi}{2k-1}+4\sin\frac{k\pi}{2k-1}\sin\frac{\pi}{2k-1}\cos\frac{(k-1)\pi}{2k-1}}} {2\sin\frac{k\pi}{2k-1}\cos\frac{(k-1)\pi}{2k-1}}=\frac{-1+ \sqrt{1+4\sin\frac{\pi}{2k-1}\cot\frac{(k-1)\pi}{2k-1}}} {2\cos\frac{(k-1)\pi}{2k-1}},
$$
proving the last statement.
\end{proof}

\begin{remark}\label{rm:constwidthquadrangle}
For $k=2$ we have
$$
\tanh r_{\min}:=
\left\{\begin{array}{ccc}
            \frac{-1+ \sqrt{1+4\sin\frac{\pi}{3}\cot\frac{\pi}{3}}} {2\cos\frac{\pi}{3}}=\sqrt{3}-1  & \mbox{ if } & n=3,\\
             & & \\
              \frac{1}{\sqrt{\cos ^2\frac{\pi}{4} +\tan^2\frac{\pi}{4}}}=\sqrt{\frac{2}{3}} & \mbox{ if } & n=4,
\end{array}\right.
$$
and the corresponding values of $d_{\min}$ are
$$
\sinh d_{\min}=
\left\{\begin{array}{ccc}
           \frac{2\sqrt{3}-2}{\sqrt{6\sqrt{3}-9}}=1.2408  & \mbox{ if } & n=3,\\
             & & \\
             2 & \mbox{ if } & n=4.
\end{array}\right.
$$
Observe that the respective functions $\tanh r_{\min}(2k-1)$ and $\tanh r_{\min}(2k)$ of $k$ are strictly decreasing tends to zero in the variable $k$. The question is that there is or not a value of $k$ when the difference of the two series change their sign or for every pairs of successive integers $2k-1$ and $2k$ this difference is negative. This question can be answered by computer easily. The first six values of the two functions can be found in the table below showing that the merged series is monotone decreasing up to the index $9$.
\begin{table}[ht]
  \centering
  \begin{tabular}{|c||c|c|c|c|c|c||c|c|c|c|}
     \hline
           n & 3 & 4 & 5 & 6 & 7 & 8 & 9 & 10 & 11 & 12 \\ \hline
     $\tanh r$ & 0.7321 & 0.8165 & 0.5309 & 0.5547 & 0.4080 & 0.4091 & 0.3286 & 0.3233 & 0.2739 & 0.2673 \\
     \hline
   \end{tabular}\vspace{3mm}
     \caption{The sequence of the radiuses.}\label{table:radiuses}
\end{table}
\end{remark}

An interesting phenomenon of constant width property that it depends also on the size of a body, not only on the combinatorial and symmetry properties of it. In spite of that there are bodies of constant width which size is non-essential in this point of view.

\begin{statement}\label{st:circle}
The hyperbolic circle and the hyperbolic Reuleaux polygons are convex bodies of constant width independently on their sides.
\end{statement}

\begin{proof}
Consider the projective or Cayley-Klein disk model of the hyperbolic plane and consider a hyperbolic circle which center is the centre of the model disk. This is an Euclidean circle in the model. Every leading line $YX$ is a chord of the model and the corresponding hypercycles are such half-ellipses  which are tangent to the model circle at the points $Y$ and $X$, respectively; and touching the given circle (see the left hand side of Fig. \ref{fig:circle}).

\begin{figure}[ht]
\includegraphics[scale=0.6]{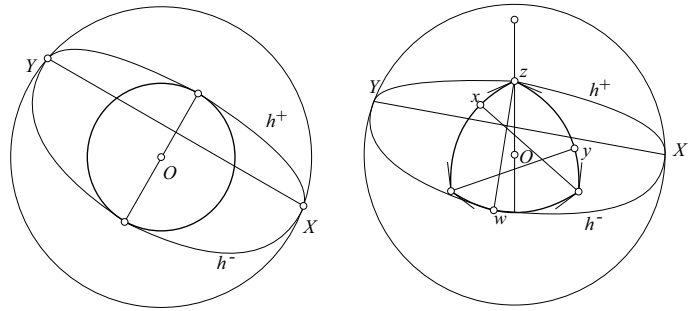}
\caption{Hyperbolic circles and Realeaux polygons are bodies of constant width}\label{fig:circle}
\end{figure}

From the rotational symmetry of the figure we can see that for all line $YX$ we get the same width $d_K(YX)$ which is equal to the hyperbolic diameter of the circle. From this the statement is obvious for the circle.

Consider the case of the Realeaux polygons. From the definition (see e.g. \cite{santalo} p.410) follows that the maximal distance $d$ of its points reach at such a pair of points that one of them is a vertex of the polygon and the other lies on the opposite arc of the boundary. Let now $YX$ a leading line and let be the corresponding hypercyles $h^+$ and $h^-$ with the touching points $z$ and $w$, respectively (see the right figure in Fig. \ref{fig:circle}). If $z$ is a vertex and $w$ lies on the opposite arc then there is a normal line at $w$ which also perpendicular to $YX$ and go through $z$. This means that $d_K(YX)=d$. On the other hand if $x$ and $y$ are two smooth points of contact of a strip corresponding to a leading line $ZW$ then the distinct normals at $x$ and $y$ are orthogonal to the line $ZW$. But in a Realeaux polygon every normal goes through the opposite vertex hence every two normals are intersect each other. This is a contradiction because there is no common perpendicular of intersecting lines. Hence $\mathrm{width}_K(X)=d$ for all $X\in \mathcal{I}$ implying that Realeaux-polygons are bodies of constant width.
\end{proof}

\section{Diametral chords and the bodies of constant diameter.}

On the usual way we define the \emph{diameter} of a convex body $K$ as the greater distance between its two points. 

\begin{statement}\label{st:diameter}
The diameter of the compact, convex body $K$ is
$$
\mathrm{diam}(K)=\sup\{\mathrm{width}_K(X): \quad X\in\mathcal{I}\}.
$$
The supremum is a maximum, so there is at least one point $X^\star\in \mathcal{I}$ such that $\mathrm{diam}(K)= \mathrm{width}_K(X^\star)$.
\end{statement}

\begin{proof} Let $z$, $w$ be such points of the boundary of $K$ for which $\mathrm{diam}(K)=d(z,w)$. Clearly, those lines $t^+$ and $t^-$ which are orthogonal to the chord $[z,w]$ at $z$ and $w$ are supporting lines of $K$. If $YX^\star$ a line which orthogonally intersects $[z,w]$ and $h^+$ and $h^-$ are the distance curves corresponding to $YX^\star$ through $z$ and $w$, respectively, then $d_K(YX^\star)=d(z,w)$. Hence $\mathrm{width}_K(X^\star)\geq d_K(YX^\star)=d(z,w)=\mathrm{diam}(K)$. Assume now that $\mathrm{width}_K(X^\star)>\mathrm{diam}(K)$. Then there is an $Y^\star\in \mathcal{I}$ such that $\mathrm{diam}(K)<d_K(Y^\star X^\star)$. Let $\{z^\star , w^\star\}$ be a touching pair of points in the intersection of the boundary of $K$ and that strip which distance is $d_K(Y^\star X^\star)$. Obviously, $d(z^\star,w^\star)\geq d_K(Y^\star X^\star)$ yielding a contradiction. Hence in fact, $\mathrm{diam}(K)=\sup\{\mathrm{width}_K(X): \quad X\in\mathcal{I}\}$, as we stated.
\end{proof}

The above proof shows that the endpoints of a diametral chord are such points of $\mathrm{bd} K$ in which the normals are double-normals. This fact is trivial in all such spaces where orthogonality is defined by metric property. There are three papers of P.V.Ara\'ujo on \emph{metric width} of the bodies of the hyperbolic plane (see  \cite{araujo_1},\cite{araujo_2}, \cite{araujo_3}) in which the concept of curves of constant width was seriously investigated. The author said that a \emph{plane-convex body is of constant with $\delta$} if its diameter is equal to $\delta $ and each point of the boundary is an endpoint  of such a chord which length is equal to $\delta$. We call such a chord to \emph{diametral chords} in this paper. In another terminology the \emph{constant diameter} property was rather used. We use also this latter phrase. The most important properties of constant diameter property (was proved first by Ara\'ujo) we now collect in a statement:

\begin{statement}[\cite{araujo_1},\cite{araujo_2}]\label{sta:constdiamprop}
A body of constant diameter of the hyperbolic plane holds the properties:
\begin{itemize}
\item Every diametral chord is a double normal of the boundary of the body, cutting the boundary curve orthogonally at both ends (more
precisely, the perpendicular line at each extremity of each diametral chord is a line of support of the body).
\item Every chord of that is orthogonal to the boundary at one end is a diametral chord, and therefore it is a double normal.
\item Any two distinct diametral chords must intersect each other.
\end{itemize}
\end{statement}

Ara\'ujo also introduced the concepts of \emph{track function} and \emph{intersection function}, respectively. He fixed a diametral chord and denoted by $\theta$ the oriented angle of another diametral chord with the fixed one (see the oriented angle $CMB$ in Fig. \ref{fig:constdiam}).

\begin{figure}[ht]
\includegraphics[scale=0.6]{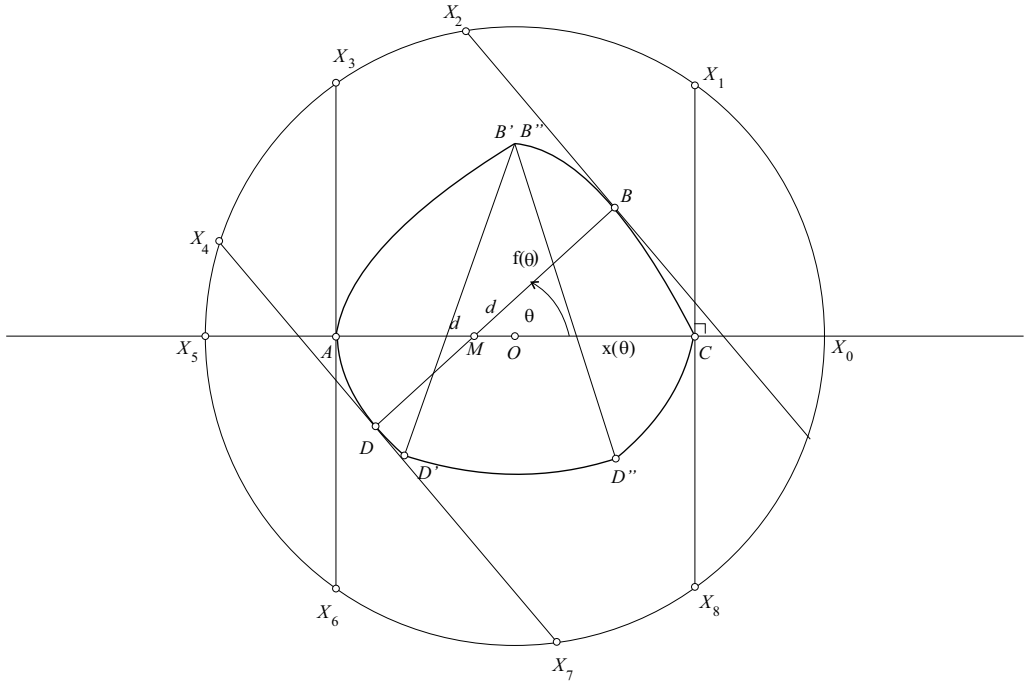}
\caption{The properties of bodies with constant diameter}\label{fig:constdiam}
\end{figure}

When a point from the right end of the fixed chord goes along the boundary then the corresponding diametral chord change its position and angle, respectively. Using the notation of the figure we can see that the boundary curve can be determined by the distance of the point of intersection $M$ to the right end $C$ (which is the value $x(\theta)$ of the track function) and the distance $MB$ which is the value of the intersection function $f(\theta)$. It was proven in \cite{araujo_3} that the original chord $AC$ can be chosen such a way that the following properties hold:
\begin{itemize}
\item Both the track function and the intersection function are continuous on each interval $(0; \pi)$ and $(\pi; 2\pi)$ and for $\theta\in(0,\pi)$ hold that $x(\theta +\pi)=x(\theta)$, $0\leq f(\theta)\leq d$ and $f(\theta)+f(\theta+\pi)=d$.
\item The diametral chord $AC$ can be chosen in such away that the continuity property of these functions extended to the interval $(0,2\pi)$ giving that they are can be extended periodically such that the above properties remain valid on $\mathbb{R}$.
\item For $C^3$ boundary between the track and intersection function there is the connection $f'(\theta)=-x'(\theta)\cos \theta$ implying that the intersection function determines the track function up to the choice of a constant.
\end{itemize}

There is a nice recent result of M. Lassak in \cite{lassak} which connects the spherical concepts of constant width and constant diameter. He proved that a convex body on the two-dimensional sphere of diameter $\delta $ is less than $\pi/2$ is of constant diameter $\delta$ if and only if it is of constant width $\delta$. As we saw in Remark \ref{rem:constwidthcurves} this is not true in the hyperbolic plane, the quadrangle of this remark is a body of constant width but it has only two diametral chords  $AC$ and $BD$, respectively. We now prove the reversal of this statement.

\begin{theorem}\label{thm:constdiam}
If the body $K$ is a convex body of constant diameter $d$ then it is a body of constant width $d$.
\end{theorem}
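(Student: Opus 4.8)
The plan is to combine the general upper bound with a variational analysis of the defining supremum. By Statement \ref{st:diameter} we already have $\mathrm{width}_K(X)\le\mathrm{diam}(K)=d$ for every ideal point $X$, so it suffices to prove the reverse inequality $\mathrm{width}_K(X)\ge d$ for each fixed $X$. Since the supremum in the definition of $\mathrm{width}_K(X)$ is attained, let $g^\star=Y^\star X$ be a leading line with $\mathrm{width}_K(X)=d_K(g^\star)$, and denote by $z$ and $w$ the touching points of the two bounding hypercycles, so that $K$ lies in the strip and $d_K^{+}=\mathrm{dist}(z,g^\star)$, $d_K^{-}=\mathrm{dist}(w,g^\star)$. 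The heart of the argument is to show that the touching chord $[z,w]$ is orthogonal to $g^\star$; once this is established the constant-diameter hypothesis finishes the proof quickly.

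To identify the maximiser I would pass to the upper half-plane model and use the isometry group to send $X$ to the ideal point $\infty$, so that the lines through $X$ become the vertical geodesics $g_c=\{x=c\}$ and $Y^\star$ is replaced by a real parameter $c$. For a point $p=(x_p,y_p)$ one has $\sinh\mathrm{dist}(p,g_c)=|x_p-c|/y_p$, hence writing $z=(x_z,y_z)$ and $w=(x_w,y_w)$ for the farthest points on the two sides, $d_K(g_c)=\sinh^{-1}\frac{x_z-c}{y_z}+\sinh^{-1}\frac{c-x_w}{y_w}$. Differentiating (the farthest points being the active maximisers, by the envelope principle) gives $\frac{d}{dc}d_K(g_c)=-\frac{1}{R_z}+\frac{1}{R_w}$, where $R_z=\sqrt{(x_z-c)^2+y_z^2}$ and $R_w=\sqrt{(x_w-c)^2+y_w^2}$ are precisely the Euclidean radii of the perpendicular geodesics from $z$ and from $w$ to $g_c$, that is, the heights of their feet on the vertical line. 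At the interior maximum $c^\star$ this derivative vanishes, forcing $R_z=R_w$; thus the two perpendicular feet coincide, and since there is a unique geodesic orthogonal to $g^\star$ at that common foot, both $z$ and $w$ lie on it. Therefore $[z,w]\perp g^\star$, which is the required orthogonality.

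It then remains to read off the conclusion through the constant-diameter hypothesis. Because $z$ is the farthest point of $K$ from $g^\star$, the supporting line of $K$ at $z$ is tangent to the bounding hypercycle there, hence orthogonal to the perpendicular $[z,\text{foot}]\subset[z,w]$; the same holds at $w$, so $[z,w]$ is a double normal. By the second property in Statement \ref{sta:constdiamprop}, any chord orthogonal to the boundary at one end of a body of constant diameter is a diametral chord, so $d(z,w)=d$. Finally, since $[z,w]\perp g^\star$ its endpoints project to the same foot $F$, giving $d_K(g^\star)=\mathrm{dist}(z,F)+\mathrm{dist}(F,w)=d(z,w)=d$; hence $\mathrm{width}_K(X)=d$, and as $X$ was arbitrary $K$ has constant width $d$. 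The main obstacle is the middle step: justifying the differentiation, namely the uniqueness of the farthest points and the interiority of the maximiser (a strip that degenerates to a tangent line at one end does not realise the maximum). I would dispose of this by invoking the strict convexity and $C^1$ regularity of constant-diameter bodies established by Ara\'ujo, passing to one-sided derivatives at the isolated parameters where the farthest point jumps if necessary. Note that the orthogonality of the maximising strip holds for any smooth strictly convex body; the constant-diameter hypothesis enters only to upgrade the double normal $[z,w]$ to a chord of the full length $d$.
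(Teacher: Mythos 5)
Your argument is correct in substance but follows a genuinely different route from the paper's. The paper argues ``from chords to directions'': for each diametral chord $AC$ it considers the arc $\mathcal{X}(AC)$ of ideal points $X$ admitting a line $YX$ orthogonal to $AC$ (so that $d_K(YX)\ge|AC|=d$), and then shows by a continuity argument on support lines that these arcs cover all of $\mathcal{I}$. You argue ``from directions to chords'': fixing $X$, you analyse the maximising strip and prove, via the first-variation computation $\tfrac{d}{dc}d_K(g_c)=-1/R_z+1/R_w$ in the half-plane model, that its touching chord is orthogonal to the leading line, hence a double normal, hence (by the second item of Statement \ref{sta:constdiamprop}) a diametral chord of length $d$ realising $d_K(g^\star)=d$. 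Both proofs invoke Ara\'ujo's properties at the same point (to upgrade a double normal to a chord of length exactly $d$) and both use Statement \ref{st:diameter} for the upper bound $\mathrm{width}_K(X)\le d$. Your version buys an explicit, checkable computation in place of the paper's rather terse covering argument; the computation itself (the distance formula, the identification of $R_z$ and $R_w$ as the heights of the feet, the passage to one-sided derivatives when the farthest point is not unique) is sound.

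The one real gap is the case you flag yourself: the maximum of $c\mapsto d_K(g_c)$ may be attained only at an endpoint of the admissible interval, i.e.\ at a line $Y_0X$ tangent to $K$ at some boundary point $w_0$, where the interior first-order condition is unavailable. Appealing to strict convexity or $C^1$ regularity of constant-diameter bodies does not repair this: regularity of $K$ says nothing about where on the parameter interval the maximum sits, and in fact the tangent positions \emph{do} attain the maximal value for these bodies, so interiority cannot be forced. The correct repair comes from the hypothesis itself and is immediate: $Y_0X$ is a support line of $K$ at $w_0$, so by Statement \ref{sta:constdiamprop} the chord $[w_0,A]$ of $K$ issuing from $w_0$ orthogonally to $Y_0X$ is a diametral chord of length $d$; since it lies along the perpendicular to $Y_0X$ at $w_0$, the point $A$ is at distance exactly $d$ from $Y_0X$, whence $d_K(Y_0X)\ge d$ and, with the upper bound, $\mathrm{width}_K(X)=d$. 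Note that this observation makes the variational analysis unnecessary altogether: applying it to one tangent line per ideal point already proves the theorem. With the endpoint case closed in this way, your proof is complete.
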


\begin{proof}
Since the diameter is the maximum value of the width function of $K$ we have to prove that there is no such direction from which the width of $K$ is less than the diameter $d$. Let $AC$ be a diametral chord with endpoints $A$, $C$ and length $d$. Let $YX$ be any line which intersects $AC$ orthogonally. Since $AC$ is a double normal $d\geq \mathrm{width}_K(X)\geq \mathrm{width}_{AC}(X)=d$. Let $\mathcal{X}(AC)$ be the set of ideal points from which the width of $AC$ is equal to $d$. We should prove that $\cup\{\mathcal{X}(AC) \, : \, AC \mbox{ is a diametral chord }\}= \mathcal{I}$. Observe that the tangent lines at the smooth points of the boundary continuously change their position such that the function $\theta(s)$ parameterized by the arc-lengths is strictly increasing and at a corner points each support line determines a diametral chord which is orthogonal to it and one end is the given corner point. From this latter follows that for each $\theta$ there is an unique support line of the body such that the intersection points of the support line with a great circle containing the body produce a continuous bijection between them. This means that the union of the ideal points of the support lines already cover the whole $\mathcal{X}$ implying that it is even more true for $\cup\{\mathcal{I}(AC) \, : \, AC \mbox{ is a diametral chord }\}$.
\end{proof}

We saw that without further condition a body of constant width is not a body of constant diameter. To give a positive result in this a  direction we introduce a new concept.

\begin{defi}
We say that a convex, compact body has the \emph{constant shadow property} if there is a positive number $d$ such that $d_K(YX)=d$ for all line $YX$ which intersects the body $K$.
\end{defi}

Of course a body which has the constant shadow property with the number $d$ then it is a body of constant width with the same $d$. A circle is a body with this property, hence this class is not empty. We prove the following:
\begin{theorem}\label{thm:constdiamandconstshadow}
The convex, compact body $K$ of the hyperbolic plane has the constant shadow property with the number $d$ then it is a body of constant diameter $d$.
\end{theorem}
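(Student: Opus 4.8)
The plan is to first reduce the statement to a single geometric construction and then to read off a diametral chord through an arbitrary boundary point. First I would record what the hypothesis gives for free. If $d_K(YX)=d$ for every line $YX$ meeting $K$, then in particular every admissible line in the supremum defining $\mathrm{width}_K(X)$ contributes the value $d$, so $\mathrm{width}_K(X)=d$ for all $X\in\mathcal{I}$; thus $K$ has constant width $d$ in the sense of Definition~\ref{def:constanwidth}. Combined with Statement~\ref{st:diameter} this yields $\mathrm{diam}(K)=\sup\{\mathrm{width}_K(X):X\in\mathcal{I}\}=d$. Hence the diameter already equals $d$, and by the definition of constant diameter it remains only to show that \emph{every} boundary point of $K$ is an endpoint of a chord of length $d$.

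The key idea is to feed a support line of $K$ into the constant shadow hypothesis as a (degenerate) leading line. Fix $z\in\mathrm{bd}(K)$ and a supporting geodesic $L_z$ of $K$ at $z$; the whole body lies on one side of $L_z$. I would use $L_z$ itself as the leading line. Its associated supporting strip is bounded on the near side by $L_z$ (a distance-curve at distance $0$, touching $K$ at $z$, so that $d_K^-(L_z)=0$) and on the far side by a hypercycle $h^+$ that touches $K$ at some point $w$ realising $\mathrm{dist}(w,L_z)=\max_{x\in K}\mathrm{dist}(x,L_z)=d_K^+(L_z)$. Since $L_z$ meets $K$ at $z$, the constant shadow property gives $d=d_K(L_z)=d_K^+(L_z)+d_K^-(L_z)=d_K^+(L_z)$, so the far supporting hypercycle sits at distance exactly $d$ from $L_z$ and $\mathrm{dist}(w,L_z)=d$.

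Finally I would bound $d(z,w)$ from both sides. Because $z\in L_z$, the distance of $w$ from the whole line $L_z$ cannot exceed $d(z,w)$, giving $d(z,w)\geq\mathrm{dist}(w,L_z)=d$; on the other hand $d(z,w)\leq\mathrm{diam}(K)=d$. Hence $d(z,w)=d$, so $[z,w]$ is a diametral chord with $z$ as one endpoint (and $w\neq z$ since $\mathrm{dist}(w,L_z)=d>0$). As $z$ was an arbitrary boundary point, every point of $\mathrm{bd}(K)$ is an endpoint of a diametral chord, which together with $\mathrm{diam}(K)=d$ is exactly the constant diameter property.

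The step I expect to need the most care is the legitimacy of using the support line $L_z$ as a leading line, i.e.\ that the strip for $L_z$ really has $d_K^-(L_z)=0$ and that the hypothesis applies to a line touching $K$ only along its boundary. If one prefers to avoid this degenerate case, I would instead take a sequence of secant leading lines $m_n\to L_z$ through interior points, apply $d_K(m_n)=d$, and pass to the limit using the continuity of $d_K$, for which $d_K^-(m_n)\to 0$ and the far touching points converge to the desired $w$. The only other point to check is the corner case where $z$ is not a smooth boundary point: there one simply fixes one supporting line $L_z$ and runs the same argument.
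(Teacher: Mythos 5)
Your proposal is correct, but it reaches the conclusion by a genuinely different construction than the paper. The paper first shows $\mathrm{diam}(K)\le d$ by contradiction (a leading line orthogonal to a hypothetical longer chord would force $d_K(YX)>d$), and then, for a boundary point $P$, drops the chord $PQ$ orthogonal to the support line at $P$, takes a leading line $YX$ orthogonal to $PQ$, and argues by cases: either $PQ$ is a double normal, in which case $|PQ|=d_K(YX)=d$, or it is not, in which case a touching point $Q^\star$ of the far hypercycle yields a chord $PQ^\star$ longer than the diameter, a contradiction. You instead feed the support line $L_z$ itself into the constant shadow hypothesis as a degenerate leading line with $d_K^-(L_z)=0$, so the far bounding hypercycle sits at distance exactly $d$ and its touching point $w$ satisfies $d\le\mathrm{dist}(w,L_z)\le d(z,w)\le\mathrm{diam}(K)=d$. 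This buys you a shorter argument with no case analysis and no appeal to the somewhat delicate $Q^\star$ step; the price is the legitimacy of a leading line that only supports $K$, which you correctly flag and cover with a limiting argument over secant lines (this works: $d_K$ varies continuously with the line, $d_K^-(m_n)\to 0$, and the far touching points subconverge to the required $w$). Your first step, deducing $\mathrm{diam}(K)=d$ from constant width via Statement~\ref{st:diameter}, is also a legitimate shortcut compared with the paper's direct contradiction, since every admissible line in the supremum defining $\mathrm{width}_K(X)$ contributes the value $d$. Both proofs ultimately rest on the same two pillars, namely that the diameter equals $d$ and that each boundary point emits a chord of length at least $d$; you extract the second pillar from the farthest point of $K$ from the support line, while the paper extracts it from the foot of the inward normal.
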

\begin{proof}
We prove that a body of constant shadow property is a body of constant diameter. Let $d=d_K(YX)$, where $YX\cap K\ne \emptyset$. Then the diameter of $K$ is less or equal to $d$. Really, if $\mathrm{diam }K=d^\star>d$ then there is a chord $PQ$ for which $|PQ|=d^\star$. Let $YX$ is a line which orthogonally intersects the segment $PQ$. Obviously $d_K(YX)\geq d^\star >d$ which is a contradiction.

Consider a point $P$ on the boundary of $K$. Let $t$ be the supporting line of $K$ at $P$; $PQ$ be a chord of $K$ which orthogonal to $t$ and $YX$ be an intersecting line of $K$ perpendicular to $PQ$. Since $d_K(YX)=d$ we have two possibilities. If $PQ$ is a double-normal (at $Q$ it is orthogonal to a supporting line of $K$ through $Q$) then its length is $d$ and it is a chord from $P$ with length $d$. If $PQ$ isn't a double normal $d=d_K(YX)<|PQ^\star|$, where $Q^\star$ is a common point of the another branch of the hypercycle domain of minimal width containing $K$ with fundamental line $YX$. This is impossible because $PQ^\star$ is longest chord as the diameters of $K$.
\end{proof}

\begin{corollary}
For a convex compact body of the plane equivalent the following two properties
\begin{itemize}
\item It has the constant shadow property with number $d$.
\item All diameter of $K$ is a double-normal of $K$ in the general meaning. (We extend the concept of normal to a non-smooth point of the boundary such that it is a line which goes through the point and orthogonal to a supporting line of $K$ at this point.)
\end{itemize}
Theorem \ref{thm:constdiamandconstshadow} and the above equivalence implies the following theorem:

\begin{theorem}\label{theo:doublenormals}
If $K$ is a plane convex compact body of constant shadow property then every normal of $K$ is a double-normal.
\end{theorem}

Since by Theorem 3 in \cite{jeronimo-castropaper} the property that every normal of $K$ is a double-normal is equivalent to the property that $K$ is a body of constant width with respect to the function $\mathrm{widht}_4(\cdot)$, we can deduce that the property of constant shadow knows this property of constant width of Jeronimo-Castro paper.

Further analogises of this theorem well-known in Euclidean geometry (see \cite{soltan} and the references therein). in the hyperbolic plane this statement has been proved for the function $\mathrm{widht}_3(\cdot)$ in \cite{leichtweiss} (Theorem 3.7), and without proof was used for $\mathrm{widht}_1(\cdot)$ in \cite{santalo} (Section 7).
\end{corollary}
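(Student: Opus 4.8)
The plan is to obtain the conclusion as a short chain of implications built from two earlier results, Theorem~\ref{thm:constdiamandconstshadow} and the list of properties of bodies of constant diameter recorded in Statement~\ref{sta:constdiamprop}. The hypothesis is phrased entirely in terms of the strips $d_K(YX)$, while the conclusion concerns normals, so the role of the intermediate notion of constant diameter is to translate one language into the other.

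First I would apply Theorem~\ref{thm:constdiamandconstshadow} to pass from the constant shadow property with number $d$ to the statement that $K$ is a body of constant diameter $d$; this is the only step that consumes the shadow hypothesis. Then I would invoke the second item of Statement~\ref{sta:constdiamprop}: in a body of constant diameter every chord that meets the boundary orthogonally at one end is a diametral chord, and hence a double normal. The final step is to recognise each normal as a chord of this kind. Given $P\in\mathrm{bd}(K)$ and a normal $n$ at $P$ --- the line through $P$ orthogonal to some supporting line of $K$ at $P$, in the extended sense that at a corner point one is allowed any supporting line --- the body lies on one side of that supporting line, so $n$ enters the interior and leaves it at a second boundary point $Q$. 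The chord $[P,Q]$ is orthogonal to the boundary at $P$, hence by the quoted property it is a diametral chord, therefore a double normal; consequently the line $n$ that carries it is a double normal, which is exactly the assertion.

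The step I expect to need the most care is the treatment of non-smooth boundary points, and it is for this reason that the statement speaks of normals in the general (extended) meaning. At a corner $P$ there is a whole angular cone of supporting lines and hence a cone of normals, and one must verify that the second item of Statement~\ref{sta:constdiamprop} genuinely applies to every one of them, i.e. that each perpendicular chord issuing from $P$ is diametral. Once this is granted the remainder is a formal deduction and requires no computation; in particular the equivalence stated just above, that the constant shadow property is the same as every diameter being a double normal, is subsumed in this argument.
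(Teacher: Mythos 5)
Your proposal is correct and follows essentially the route the paper itself indicates (the paper gives no explicit proof, only the chain: constant shadow $\Rightarrow$ constant diameter via Theorem~\ref{thm:constdiamandconstshadow}, then the second item of Statement~\ref{sta:constdiamprop} to conclude that every chord orthogonal to the boundary at one end is a diametral chord and hence a double normal). Your remark that the delicate point is whether this applies to \emph{every} supporting line at a non-smooth boundary point is well taken --- the paper passes over it in silence --- but your deduction matches the intended argument.
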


\section{The thickness (or width) of a hyperbolic domain}

Usual the width of a convex body $K$ is the minimal width of it with respect to the directions of the plane. In the hyperbolic plane this property can be understood in two ways:
\begin{itemize}
\item In the first possibility we collect the possible values of the width function and from these values we create a common one for example taking their infimum (this leads to the usual definition of width of $K$;
\item in the second possibility we consider the intersecting lines to an independent "directions" (see the concept of constant shadow property), and from these values create the infimum.
\end{itemize}
These two methods lead to the same result, if in the first one we change the concept of width with respect to a direction to something-like thickness of the body changing the supremum to infimum in this definition.

\begin{defi}\label{def:thickness}
Let the \emph{thickness} of the body $K$ in the direction $X$ is
$$
\mathrm{thi}_K(X):=\inf\{d_K(YX)\, : \quad  YX\cap K\ne \emptyset\}
$$
and let \emph{the thickness of $K$} ($\mathrm{thi}(K)$) be the infimum of the values $\mathrm{thi}_K(X)$ take into consideration all direction $X\in\mathcal{I}$.
\end{defi}

By compactness argument we can see easily that there is at least one $X^\star\in\mathcal{I}$, such that $\mathrm{thi}_K(X^\star)=\mathrm{thi}(K)$.

Note that the concept of the thickness for a convex hyperbolic body differs from the concept of the usual width. The quadrangle $ABCD$ in Fig. \ref{fig:constwidthcurves} is a body of constant width with $d=2\ln(\sqrt{2}+1)$ but its thickness is less or equal to $2h=2\ln\left(\sqrt{\frac{3}{2}}+\sqrt{\frac{1}{2}}\right)< d$ (see Remark \ref{rem:constwidthcurves}). Unfortunately, it is also possible that a chord which represents the thickness of the body isn't a double normal. Consider Fig. \ref{fig:thickness}., where we can see a plane-convex body which is the union of two regular triangles (forming a quadrangle with equal sides), two quoters of circle and two quoters of an ellipse. We denoted by dotted lines the bounding hypercycles of the hypercycle domain. It can be seen easily that the thickness of this body is represented by a strip containing the points $u,z$, and $v,w$, respectively.

\begin{figure}[ht]
\includegraphics[scale=0.6]{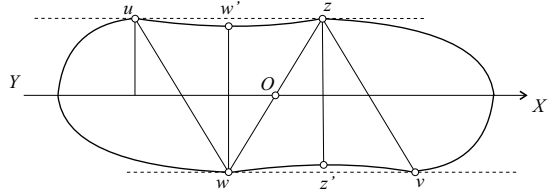}
\caption{A chord which represents the smaller strip distance isn't a double-normal.}\label{fig:thickness}
\end{figure}

Now the respective chords joining the corresponding touching points are $[u,v]$, $[u,w]$, $[z,v]$ and $[z,w]$ but non of them could be double-normal. On the other hand if the ellipse and the hypercycle at $w$ have a common tangent then the chords $[w,w]'$ and $[z,z]'$ are double-normals, respectively. This shows that there are chords which are double-normals and also are isometric diameters (their endpoints are opposite points in the meaning of \cite{jeronimo-castropaper}) but their lengths smaller than the thickness of $K$. This phenomenon is differ from the standard idea of the geometry of a convex body. From Statement \ref{st:isometricdiam} we can see that the reason is the definition of convexity and for h-convex bodies this phenomenon isn't perceptible. An interesting question that a shorter isometric diameter of a h-convex body is or isn't a double normal, and what is the connection with the thickness of the body. Presently we don't know the answer.

To verify that the definition was chosen appropriately we prove a version of Blaschke's theorem in the hyperbolic plane. This theorem says that  every convex compact body in the Euclidean plane of thickness $1$ contains a circle of radius $1/3$ (see example 17 in \cite{jaglomboltyansky} or p.287 in \cite{reiman}). For h-convex bodies we can prove an analogous of this result only . Since in hyperbolic plane there is no similarity, our result depends not only the thickness but also the diameter of the body.

\begin{theorem}\label{thm:blaschke}
Let $K$ be a h-convex body of the hyperbolic plane with diameter $d$ and thickness $t$. Then it contains a circle of radius $r$ for which
\begin{equation}\label{eq:blaschke}
\tanh r=\frac{\sinh t\cosh t}{3\cosh^2d+\sinh^2 t}.
\end{equation}
\end{theorem}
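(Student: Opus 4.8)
The plan is to imitate the classical Euclidean proof of Blaschke's theorem through the largest inscribed circle and then to convert the resulting triangle estimate into hyperbolic trigonometry. First I would let $r$ be the inradius of $K$, i.e.\ the radius of a largest circle contained in $K$, with centre $O$. The maximality of this circle forces its contact points with $\mathrm{bd}(K)$ to surround $O$: the centre $O$ must lie in the convex hull of the contact points, for otherwise the disc could be translated slightly and enlarged. Hence either there are two diametrically opposite contact points -- in which case $K$ is squeezed between two tangent geodesics at geodesic distance $2r$, so $t\le 2r$ and the estimate is far from tight -- or there are three contact points $P_1,P_2,P_3$ with $O$ in the interior of the triangle $P_1P_2P_3$. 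In the latter, binding, case the geodesics tangent to the incircle at $P_1,P_2,P_3$ are geodesic support lines of $K$; since $K$ is h-convex these support geodesics bound $K$ (as in the containment argument of Statement~\ref{st:isometricdiam}), so $K$ is contained in the hyperbolic triangle $T$ they cut out, whose incircle is exactly the circle of radius $r$.

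Since $K\subseteq T$, monotonicity of the width function gives $\mathrm{thi}(K)\le\mathrm{thi}(T)$, so it suffices to bound the thickness of a triangle circumscribed about a circle of radius $r$. Among such triangles the extremal (thickness-maximising) one, for fixed $r$, should be the regular triangle, exactly as in the Euclidean case, and this is where the lack of similarity forces the diameter into the statement. For the regular circumscribed triangle the right triangle formed by $O$, a vertex $V$ and the adjacent contact point yields the clean relation $\tanh\rho=2\tanh r$ between the circumradius $\rho=|OV|$ and $r$ -- the hyperbolic form of ``circumradius $=2\cdot$ inradius'' -- while the altitude has geodesic length $\rho+r$. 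The thickness, however, is not this geodesic altitude but the width of the minimal hypercycle strip across $T$ in the altitude direction; this I would compute with the segment-width formula of Statement~\ref{st:widthfunctionofasegment} and its hypercycle-domain consequence Statement~\ref{st:hypercycledomain}, applied to the base side whose length is controlled by the diameter. Because the strip width of a fixed geodesic configuration depends on how far that configuration sits from its leading line, I would bound the relevant chord lengths by the diameter $d$ of $K$; substituting $\tanh\rho=2\tanh r$ and $d$ for the extremal side length then collapses the expression to $\tanh r=\sinh t\cosh t/(3\cosh^2 d+\sinh^2 t)$, the factor $3$ being precisely the hyperbolic descendant of the Euclidean ``altitude $=3r$''.

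The main obstacle is this final step: the faithful translation of the geodesic inradius and altitude data of the extremal triangle into the hypercyclic thickness, together with the bookkeeping of the diameter as the controlling scale. Two points need genuine care. First, I must verify that the regular triangle really maximises $\mathrm{thi}(T)$ over all triangles with incircle radius $r$ subject to $O$ lying in the contact triangle, and that the degenerate two-contact case is dominated by it; this is a one-parameter optimisation in hyperbolic trigonometry rather than a formality. Second, I must get the inequality direction right when replacing the extremal side length by the diameter $d$, i.e.\ check the monotonicity of the thickness formula in the scale so that bounding lengths by $d$ gives a valid lower bound for $r$. As a consistency check I would confirm the Euclidean limit: letting $r,t,d\to 0$ gives $\tanh r\to r$, $\sinh t\cosh t\to t$, $\cosh^2 d\to 1$ and $\sinh^2 t\to 0$ to higher order, recovering $r=t/3$, the classical Blaschke constant, which both fixes the normalisation of the computation and confirms that the $\cosh^2 d$ term is a genuine hyperbolic correction and not an artefact.
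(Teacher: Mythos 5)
Your plan is the ``largest inscribed circle'' proof of Blaschke's theorem, which is genuinely different from what the paper does: the paper instead defines, for each boundary point $P$, the convex set $K_P=\{Q:\tanh|PQ|\le(1-\tfrac{1}{3\cosh^2 d})\tanh|PQ^\star|\}$, shows any three such sets meet by computing $\tanh AM/\tanh AM_A$ at the hyperbolic median point of a triangle with vertices in $K$, applies Helly's theorem to produce a centre $O$, and only then uses h-convexity and the tangent hypercycle strips to convert the thickness $t$ into the lower bound on $\tanh OP$. The factor $3\cosh^2 d$ arises there from $\cosh AM(\cosh AM+\cosh BM+\cosh CM)\le 3\cosh^2 d$, i.e.\ from distances between points \emph{of $K$ itself}, all of which are at most $d$.

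This is where your proposal has a genuine gap, not just an unfinished computation. Your circumscribed triangle $T$ \emph{contains} $K$, so its sides and vertices are not controlled by $\mathrm{diam}(K)=d$ at all --- the tangent geodesics at the three contact points can meet arbitrarily far from $K$ (or not meet, leaving an asymptotic or doubly-asymptotic region), so $\mathrm{diam}(T)$ can exceed $d$ by any amount. Consequently the step ``substitute $d$ for the extremal side length'' replaces a quantity you cannot bound above by one you would need it to be bounded above by; the inequality runs the wrong way, and the issue you flag as ``getting the direction right'' is in fact fatal to obtaining a bound in terms of $d$ by this route. In addition, the two steps you defer --- that the regular circumscribed triangle maximises the hypercyclic thickness $\mathrm{thi}(T)$ for fixed inradius (including the asymptotic cases), and the translation of the geodesic altitude $\rho+r$ into the strip width $d_T(YX)$ of Definition~\ref{def:thickness} --- are each substantial hyperbolic-trigonometric arguments, and there is no reason to expect them to reproduce the specific expression $\sinh t\cosh t/(3\cosh^2 d+\sinh^2 t)$, which in the paper falls out algebraically from the single constant $c=1-\tfrac{1}{3\cosh^2 d}$ supplied by the Helly argument. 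The Euclidean limit check is correct but does not discriminate between the two proofs. To salvage your approach you would have to bound the circumscribed triangle's data by intrinsic data of $K$; the paper avoids this entirely by never leaving $K$.
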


We note that if we use  a first order approximation of the hyperbolic functions this formula leads to the Euclidean one, namely we get  $r=t/3$.

\begin{proof}
Consider a boundary point $P$ of $K$ and a point $Q\in K$. The half-line $\overrightarrow{PQ}$ intersects the boundary of $K$ at the point $Q^\star$. Let $K_P$ be the set containing those points $Q$ of $K$ for which $\tanh |PQ|\leq  \left(1-\frac{1}{3\cosh^2d}\right)\tanh |PQ^\star|$. We prove that $K_P$ is a convex set. Observe that a shrinking map which any point $X$ of the  plane sends to that point $X'$ of the half-line $PX$ for which $\tanh |PX'|=\lambda \tanh |PX|$ ($0<\lambda \leq 1$) is a collinearity which maps to a line to another line. In fact, if the line $l$ given for which the point $P$ doesn't belong to $l$, then we can consider that point $T$ of it for which $PT$ is perpendicular to $l$. Let another point of the line $X$, and consider the triangles $PTX_\triangle$ and $PT'X'_\triangle$, respectively. Let denote the angle $TPX_{\angle}$ by $\alpha$. Then $\cos\alpha=\tanh PT/\tanh PX$ since the first triangle is a right one. But $\tanh PT/\tanh PX=\tanh PT'/\tanh PX'$ so the second triangle is a right triangle. Hence the points $X'$ are on the line through the point $T'$ and orthogonal to the line $PT$. Hence this mapping takes a convex set into a convex set. Consider three points $A,B,C$ on the boundary of $K$ and the sets $K_A$, $K_B$ and $K_C$, respectively. We prove that $K_A\cap K_B\cap K_C$ is non-empty. Consider the median point $M$ of the triangle $ABC_\triangle$. (If $M_A,M_B$ and $M_C$ are the middle point of the sides $BC$, $AC$ and $AB$, respectively, then (by Menelaos theorem) the medians $MM_A$, $MM_B$ and $MM_C$ intersect each other in a point $M$.) Since the median point $M$ holds the equality
$$
\frac{\sinh AM}{\sinh MM_A}=2\cosh \frac{a}{2}=\frac{\cosh BM+\cosh CM}{\cosh MM_A}
$$
(see Theorem 4.3 in \cite{ghorvath_1}, or Theorem 1.2 in \cite{ghorvath_2} and the manuscript \cite{ghorvath_3} containing the proofs of these statements), we get
$$
\frac{\tanh AM}{\tanh MM_A}=\frac{\sinh AM\cosh MM_A}{\sinh MM_A\cosh AM}=\frac{\cosh BM+\cosh CM}{\cosh AM}
$$
On the other hand we have that
$$
\frac{\tanh AM}{\tanh AM_A}=\frac{\tanh AM+\tanh^2 AM\tanh MM_A}{\tanh AM+\tanh MM_A}=\frac{\frac{\tanh AM}{\tanh MM_A}+\tanh^2 AM}{\frac{\tanh AM}{\tanh MM_A}+1}=1-\frac{1-\tanh^2 AM}{\frac{\tanh AM}{\tanh MM_A}+1}=
$$
$$
=1-\frac{1}{\cosh ^2AM\left(\frac{\tanh AM}{\tanh MM_A}+1\right)}=1-\frac{1}{\cosh AM\left(\cosh AM+\cosh BM +\cosh CM\right)}\leq 1-\frac{1}{3\cosh^2d}.
$$
Since Helly's theorem on convex, compact sets of the hyperbolic plane is also valid we get that there is a point $O$ in the body $K$ that for each chord $PP'$ through $O$ has the property $\tanh PO\leq (1-\frac{1}{3\cosh^2d})\tanh PP'$.

We prove that the circle with center $O$ and radius $r$ in Eq. \ref{eq:blaschke} lies in $K$. Let $P$ be a boundary point of $K$. Since $K$ is a h-convex compact body there is a strip symmetric to its leading line such that one of its bounding hypercycle tangents $K$ at $P$ (see Statement \ref{st:isometricdiam} and Corollary \ref{cor:equivalence}). The other side of the strip supports $K$ at $Q$. Now consider the chord $QR$ which goes through $O$, and a line $l$ through $O$ which perpendicular to the generator $YX$ of the strip.
\begin{figure}[ht]
\includegraphics[scale=0.6]{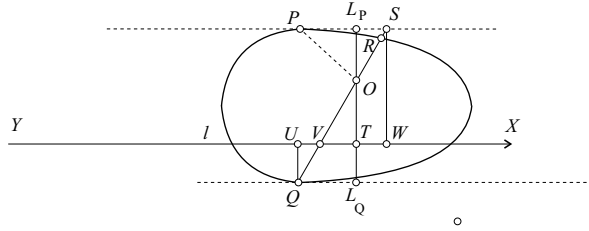}
\caption{The proof of Theorem \ref{thm:blaschke}.}\label{fig:blaschke}
\end{figure}
$l$ intersects the bounding hypercycles and the line $YX$ at the points $L_P$,$L_Q$ and $T$ respectively (see Fig. \ref{fig:blaschke}). $S$ and $V$ the intersection of $QR$ with the hypercycle (is going through $P$) and the line $YX$, respectively. The orthogonal projections of the points $Q$ and $S$ to the line $YX$ are $U$ and $W$. Using the hyperbolic theorem of sines we get
$$
\sin OVT_{\angle}=\frac{\sinh QU}{\sinh QV}=\frac{\sinh OT}{\sinh OV}=\frac{\sinh SW}{\sinh SV}
$$
Since $L_QO=OT+UQ$ and $L_QL_P=SW+UQ$ we get
$$
\frac{\tanh L_QO}{\tanh L_QL_P}= \frac{\tanh (OT+UQ)}{\tanh (SW+UQ)}=\frac{(\tanh OT+\tanh UQ)(1+\tanh SW\tanh UQ)}{(\tanh SW+\tanh UQ)(1+\tanh OT\tanh UQ)}=
$$
$$
=\frac{(\sinh OT\cosh UQ+\sinh UQ\cosh OT)(\cosh SW\cosh UQ+\sinh SW\sinh UQ)}{(\sinh SW\cosh UQ+\sinh UQ\cosh SW)(\cosh OT\cosh UQ+\sinh OT\sinh UQ)}=
$$
$$
=\frac{(\sinh OV\cosh UQ+\sinh QV\cosh OT)(\cosh SW\cosh UQ+\sinh SV\sinh QV)}{(\sinh SV\cosh UQ+\sinh QV\cosh SW)(\cosh OT\cosh UQ+\sinh OV\sinh QV)}\leq
$$
$$
\frac{\max\{\cosh UV,\cosh VT, \cosh VW\}}{\min\{\cosh UV,\cosh VT, \cosh VW\}}\frac{\tanh QO}{\tanh QS}\leq \frac{\max\{\cosh UV, \cosh VW\}}{\min\{\cosh UV,\cosh VW\}}\frac{\tanh QO}{\tanh QR}=\frac{\tanh QO}{\tanh QR}.
$$
(The last equality follows from the fact, that the strip symmetric to its leading line, hence $UV=VW$.)
Hence we get
$$
\tanh L_QO\leq \frac{\tanh QO}{\tanh QR}\tanh L_QL_P\leq c\tanh L_QL_P,
$$
where
$$
c:=\min\frac{\tanh QO}{\tanh QR}=\left(1-\frac{1}{3\cosh^2d}\right).
$$
This implies
$$
\tanh OP\geq \tanh OL_P=\tanh (L_QL_P-L_QO)=\frac{\tanh L_QL_P-\tanh L_QO}{1-\tanh L_QL_P\tanh L_QO}\geq \frac{(1-c)\tanh L_QL_P}{1-c\tanh^2L_QL_P}\geq
$$
$$
\geq \frac{(1-c)\tanh t}{1-c\tanh^2 t}=\frac{\frac{1}{3\cosh^2d}}{1-(1-\frac{1}{3\cosh^2d})\tanh^2 t}\tanh t=\frac{\tanh t}{3\cosh^2d(1-\tanh^2 t)+\tanh^2 t}=$$
$$
\frac{\tanh t\cosh^2t}{3\cosh^2d+\sinh^2 t}=\frac{\sinh t\cosh t}{3\cosh^2d+\sinh^2 t},
$$
because $\tanh (x-y)$ is a strictly decreasing function of $y$ and $f(x)=\frac{(1-c)\tanh x}{1-c\tanh^2 x}$ is a strictly increasing functions of $x$. Since the distance of an arbitrary boundary point from $O$ is at least the required value, the statement is proved.
\end{proof}

\end{document}